\newcommand{\tsk}[1]{\textcolor{YellowOrange}}
\def\@endtheorem{\endtrivlist}% NEW
\newtheorem{teo}{Theorem}[section]
\newtheorem*{teo*}{Theorem} %l'ambiente è teorem ma senza il numero
\newtheorem*{mainteoA}{Theorem A}
\newtheorem*{mainteoB}{Theorem B}
\newtheorem*{mainteoC}{Theorem C}
\newtheorem{defin}[teo]{Definition}
\newtheorem{defin*}{Definition}
\newtheorem{prop}[teo]{Proposition}
\newtheorem*{prop*}{Proposition} %l'ambiente è proposition ma senza il numero
\newtheorem{cor}[teo]{Corollary}
\newtheorem*{cor*}{Corollary} %l'ambiente è corollary ma senza il numero
\newtheorem{lemma}[teo]{Lemma}
\theoremstyle{definition}
\newtheorem{remark}[teo]{Remark}
\newtheorem*{conj*}{Conjecture} %l'ambiente è congettura ma senza il numero 
\newtheoremstyle{dico}% name of the style to be used
{\baselineskip}   % ABOVESPACE
{\topsep}   % BELOWSPACE
{}  % BODYFONT
{0pt}       % INDENT (empty value is the same as 0pt)
{} % HEADFONT
{.}         % HEADPUNCT
{5pt plus 1pt minus 1pt} % HEADSPACE
{}          % CUSTOM-HEAD-SPEC
\theoremstyle{dico}
\newtheorem{say}[teo]{}
\numberwithin{equation}{section}
\newcommand{\ra}{\rightarrow}
\newcommand{\C}{\mathbb{C}}
\newcommand{\R}{\mathbb{R}}
\newcommand{\Zeta}{{\mathbb{Z}}}
\newcommand{\QQ}{{\mathbb{Q}}}
\newcommand{\meno}{^{-1}}
\newcommand{\Prym}{\operatorname{Prym}}
\newcommand{\restr}[1]          {\vert_{#1}}
\renewcommand{\setminus}{-}
\renewcommand{\phi}{\varphi}
\newcommand{\Gl}{\operatorname{GL}}
\newcommand{\PP}{\mathbb{P}^1}   
\newcommand{\V}{\mathbb{V}}
\newcommand{\OO}{\mathcal{O}}
\newcommand{\sieg}{\mathfrak{S}}
\newcommand{\siegg}{\mathfrak{S}_g}
\newcommand{\U}{\operatorname{U}}
\newcommand{\SU}{\operatorname{SU}}
\newcommand{\Sp}{\operatorname{Sp}}
\newcommand{\A}{\mathcal{A}}
\newcommand{\Z}{\mathsf{Z}}
\newcommand{\M}{\mathcal{M}}
\newcommand{\ag}{\mathcal{A}_g}
\newcommand{\Bb}{\mathcal{B}}
\newcommand{\mg}{\mathcal{M}_g}
\newcommand{\B}{\mathsf{B}}
\newcommand{\mihi}[1]{}
\newcommand{\Nm}{\operatorname{Nm}}
\newcommand{\Pic}{\operatorname{Pic}}
\newcommand{\Mon}{\operatorname{Mon}}
\newcommand{\Hdg}{\operatorname{Hg}}
\begin{document}

\pagestyle{myheadings}

\title{Families of cyclic curve coverings  with maximal monodromy}

\author{Irene Spelta}
\address{Irene Spelta, Institut f\"ur Mathematik, Humboldt-Universit\"at zu Berlin, Unter den Linden 6, 10099 Berlin,
Germany.}
\email{irene.spelta@hu-berlin.de}
\author{Carolina Tamborini}
\address{Carolina Tamborini, Essener Seminar f\"ur Algebraische Geometrie und Arithmetik, Universit\"at Duisburg-Essen, Thea-Leymann-Str. 9, 45127 Essen, Germany.}
\email{carolina.tamborini@uni-due.de}

\thanks{\textit{2020 Mathematics Subject Classification}. 14H10, 14H40, 14G35, 14D07.\\
	The authors are members of GNSAGA (INdAM). I. Spelta was partially supported by the Spanish MINECO research project PID2023-147642NB-I00.
    C.Tamborini was partially supported by the DFG-Research Training Group 2553
“Symmetries and classifying spaces: analytic, arithmetic, and derived”
    }
    
%\date{}
\begin{abstract}
    We study the algebraic monodromy of families of cyclic Galois coverings of curves. Under a condition on the $G$-decomposition of the associated variation of Hodge structures, we prove a criterion for the  maximality of the monodromy. The proof combines the genus-zero case with a degeneration argument involving Prym varieties of certain admissible coverings.  As a consequence of our criterion, we show that for $g\geq 8$ there exists no special family of Galois covers of the type we consider, providing new evidence towards the Coleman-Oort conjecture. Finally, we determine when the loci of double and triple Galois covers are totally geodesic.
\end{abstract}

\maketitle
%\tableofcontents
%	\setcounter{section}{-1}

% \begin{itemize}
% 	\item Intro: Hodge loci in $\M_g$ e caso cui ci focussiamo;
% 	\item famiglie, monodromia e tipi tipologici;
% 	\item Casi $-id\not\in G$ allora, $id\in G$, OK
% 	\item intersezione con luogo iperellittico, nostro esempio. OK
% 	\item Esempi di vuotezza: \rosso{esempio con il punto-Problema}, esempio Looijenga; OK
% 	\item Hedmonds e Ewing: esempio non unicitä componenti, dimensione (con generelizzazione) e bound numero componenti. OK
% 	\item sezione: explicit examples of typical and atypical intersections
%     \item bound codimension hodge locus per le nostre PEL. Capire classi di hodge per Ag e legame con endomorfismi.
% \end{itemize}

\section{Introduction}
\begin{say}
In this paper, we prove a criterion for maximality of the algebraic monodromy group for certain families of cyclic coverings of curves. Our technique relies on the study of the natural $\QQ$-variation of Hodge structures associated with the family. 
For a cyclic group $G$, let  $f:\mathcal{C}\rightarrow B$ be the family parametrizing all Galois covers
$C_b \to C'_b = C_b/G$ with prescribed ramification and monodromy. The $G$-action on the curves induces a $G$-action on $H^1(C_b,\QQ)$, which preserves the cup product and yields the inclusion $G\subset \operatorname{Sp}(H^1(C_b,\QQ))$. 
Let $\Mon^0 \subset \operatorname{Sp}(H^1(C_b,\QQ))$ be the algebraic monodromy group associated with the semisimple local system $\mathbb{V}=R^1f_*\QQ$. Since the monodromy must preserve the cup product as well as the $G$-action, we have the inclusion \begin{equation*}
    \Mon^0 \;\subseteq\; (\Sp(H^1(C_b,\QQ))^G)^{der},
\end{equation*}
where $\Sp(H^1(C_b,\QQ))^G$ is the centralizer of $G$ inside $\Sp(H^1(C_b,\QQ))$, and $(\Sp(H^1(C_b,\QQ))^G)^{der}$ denotes its derived subgroup. It is natural to investigate when such inclusion is actually an equality. 
% and let $\Hg\subset\operatorname{Sp}(H^1(C_b,\QQ))$ be the generic Hodge group of the family. 
% It is a classical result by Andr\'e that $\Mon^0$ is a semisimple normal subgroup
% of $\Hdg$. More precisely, we have
% \begin{equation*}
%     \Mon^0 \;\subseteq\; \Hdg \;\subseteq\; \Sp(H^1(C_b,\QQ))^G,
% \end{equation*}
% where $\Sp(H^1(C_b,\QQ))^G$ is the centralizer of $G$ inside $\Sp(H^1(C_b,\QQ))$, and therefore, it is natural to investigate when the inclusions above are actually equalities. 
To address this problem, we exploit the following   
decomposition induced by the group $G$: 
\begin{gather}\label{spG}
		\Sp(H^1(C_b, \R))^G \simeq  \prod_{(\chi, \bar\chi ), \chi\neq\bar\chi} \U(m_\chi, m_{\bar \chi})\times\prod_{(\chi, \bar\chi ), \chi=\bar\chi}\Sp(2m_\chi, \R),
	\end{gather}
where $m_{\chi}$ is the multiplicity of the $\chi$-eigenspace for the action of $G$ on $H^{1,0}(C_b)$ (and similarly for the conjugate $\bar\chi$). In the following, we consider families with \emph{no repeating factors} in the monodromy, meaning that, for any two characters $\chi_1$ and $\chi_2$ with $m_{\chi_i}, m_{\bar\chi_i} \neq 0$, the multiplicities satisfy $
\{m_{\chi_1},\, m_{\bar\chi_1}\} \neq \{m_{\chi_2},\, m_{\bar\chi_2}\}
$. Our first result is the following.

\begin{mainteoA}\label{teo A}
 Let $\mathcal{C}\rightarrow \B$ be a family of cyclic $G$-coverings with no repeating factors in the monodromy with $g'=g(C_b')\geq 1$. Then the monodromy group is maximal, namely
\begin{gather*}
        \Mon^{0}_{\R} = (\Sp(H^1(C_b, \R))^G)^{der}.
    \end{gather*}  
\end{mainteoA}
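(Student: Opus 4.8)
The plan is to combine the product decomposition \eqref{spG} with a reduction to the genus-zero situation. Passing to derived subgroups in \eqref{spG}, the target becomes a product of almost simple factors,
\[
\bigl(\Sp(H^1(C_b,\R))^G\bigr)^{der}\;\simeq\;\prod_{(\chi,\bar\chi),\,\chi\neq\bar\chi}\SU(m_\chi,m_{\bar\chi})\;\times\;\prod_{(\chi,\bar\chi),\,\chi=\bar\chi}\Sp(2m_\chi,\R),
\]
where each factor acts irreducibly on the corresponding $G$-isotypic summand of $H^1(C_b,\C)=\bigoplus_\chi H^1_\chi$ and trivially on the others. Since $\V=R^1f_*\QQ$ underlies a polarized $\QQ$-VHS, $\Mon^0$ is semisimple, and the statement splits into two tasks: (i) $\Mon^0$ surjects onto each almost simple factor; (ii) $\Mon^0$ is not a proper diagonal subproduct. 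For (ii) I would argue in Goursat/Ribet style: a semisimple subgroup surjecting onto every factor decomposes, by Goursat's lemma, into diagonals over blocks of mutually isomorphic factors, and a nontrivial such diagonal would force an isomorphism between the corresponding isotypic local systems $\V_\chi$. The \emph{no repeating factors} hypothesis makes the invariants $\{m_\chi,m_{\bar\chi}\}$ pairwise distinct, ruling out such isomorphisms; hence (ii) holds and the whole problem reduces to the surjectivity (i).

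The factor attached to the trivial character is $\Sp(2m_1,\R)=\Sp(2g',\R)$, acting on the isotypic piece $H^1(C'_b,\QQ)$; here the hypothesis $g'\geq 1$ enters. As $C'_b$ varies, the base $\B$ dominates the moduli of the quotient curves, and the classical surjectivity of the genus-$g'$ mapping class group onto $\Sp(2g',\Zeta)$ shows that $\Mon^0$ fills this factor. For the factors attached to the nontrivial characters the base case is $g'=0$: then $C'_b\cong\PP$, the family consists of cyclic covers of $\PP$ branched along a moving configuration of points, and the isotypic local systems $\V_\chi$ are of hypergeometric type, whose monodromy is Zariski dense in $\SU(m_\chi,m_{\bar\chi})$ (resp.\ $\Sp(2m_\chi,\R)$). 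I would take this genus-zero maximality as the established input for the general case.

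To lift the nontrivial characters from $g'=0$ to $g'\geq 1$, I would run a degeneration argument. Choosing a one-parameter specialization in which $C'_b$ acquires a node and degenerates to a nodal curve $C'_0$ carrying a $\PP$-component $P$ on which a prescribed part of the branch divisor concentrates, the Harris--Mumford theory of admissible covers specializes $C_b\to C'_b$ to an admissible $G$-cover $C_0\to C'_0$. The $G$-isotypic pieces degenerate compatibly, and the associated graded of the limit mixed Hodge structure on the Prym part $\prym$ recovers, on the component $P$, the Hodge structure of a genus-zero cyclic cover of $P$. Letting the branch points move inside $P$ yields a genus-zero subfamily in the boundary of $\B$, and the induced map on fundamental groups injects its monodromy into $\Mon^0$. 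By the base case this already fills the factor $\SU(m_\chi,m_{\bar\chi})$ for every character $\chi$ supported on $P$; arranging, for each nontrivial $\chi$, a degeneration that isolates $\chi$ on such a $\PP$-component then yields surjectivity onto every remaining factor, completing (i).

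The main obstacle is the control of this last degeneration. One must exhibit an admissible $G$-cover whose $\PP$-component genuinely isolates the chosen eigenspace $H^1_\chi$, and check that the resulting vanishing-cycle (Picard--Lefschetz) monodromy lands in the single factor $\SU(m_\chi,m_{\bar\chi})$ rather than mixing factors of \eqref{spG}. This requires analysing the weight filtration on the limit mixed Hodge structure of $\prym$ together with the combinatorics governing how an admissible $G$-cover distributes the characters among the components of $C_0$; it is precisely here that the \emph{no repeating factors} condition must be used to keep the factors separated. Once every factor is reached, the Goursat argument of the first paragraph closes the proof.
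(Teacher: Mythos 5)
Your overall skeleton --- surjectivity onto each almost simple factor plus a Goursat-style argument using \emph{no repeating factors} to exclude diagonal subproducts, with the genus-zero hypergeometric case as input and a degeneration to reach it --- matches the paper's strategy. The trivial-character factor and the $g'=0$ base case are handled exactly as in the paper (Theorem \ref{tutti diversi}, which rests on Rohde's density result). However, the step you yourself flag as ``the main obstacle'' is a genuine gap, and the specific degeneration you propose would not close it. If you degenerate $C'_b$ to a reducible nodal curve with a $\PP$-component $P$ on which part of the branch divisor concentrates, the $\chi$-eigenspace of $H^1$ of the limit splits between the covers of $P$ and of the other components; by Chevalley--Weil the multiplicities of $\chi$ on the cover of $P$ are strictly smaller than $m_\chi$ whenever $g'\geq 1$, so the genus-zero boundary subfamily only produces a copy of $\SU(m'_\chi,m'_{\bar\chi})$ with $(m'_\chi,m'_{\bar\chi})\neq(m_\chi,m_{\bar\chi})$ sitting inside the factor $\SU(m_\chi,m_{\bar\chi})$, not the full factor. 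No choice of ``isolating'' bubble fixes this, because the deficit comes from the genus of the complementary component, not from the distribution of branch points.

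The paper's resolution is different in a way that matters: it degenerates to an \emph{irreducible} one-nodal quotient curve ($\Delta_0$-degeneration), so that the normalization $D^\nu$ has genus $g'-1$ and the normalized cover acquires a canceling pair $(1,-1)$ of local monodromies. The point of Lemma \ref{lemma-m} is that the Chevalley--Weil formula then gives $m^\nu_{\chi}=m_{\chi}$ exactly --- the $+1$ from the canceling pair offsets the $-1$ from the genus drop --- so the \emph{no repeating factors} hypothesis and, crucially, the \emph{sizes} of all factors are preserved. Combined with the isogeny $\Prym(\tilde D,D)\sim\Prym(\tilde D^{\nu},D^{\nu})$ (Theorem \ref{iso prym}, proved via the snake lemma on the semiabelian Jacobians), one gets an honest induction on $g'$ rather than a single jump to genus zero. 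You would need to replace your bubbling degeneration by this one (or iterate it $g'$ times to reach an irreducible rational nodal quotient) and supply the analogue of Theorem \ref{iso prym} before your argument is complete. A minor additional point: to pass from $(\Sp^G)^{pos}$ to $(\Sp^G)^{der}$ for $g'\geq 1$ one must also rule out factors with $m_\chi m_{\bar\chi}=0$ but $(m_\chi,m_{\bar\chi})\neq(0,0)$, which the paper does via Chevalley--Weil; your write-up does not address this.
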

(See Theorem \ref{main}).
Maximality results for the monodromy of families of algebraic varieties are widely studied in algebraic geometry. Regarding $G$-coverings of curves, we mention e.g. \cite{biswasx2, biswasx3, Looijenga, landesmaneco}.  In particular we point out that, for $g'\geq 4$, Theorem A partially recovers \cite[Theorem 1.3]{landesmaneco}, where the authors prove a strong maximality result for the monodromy of families of any Galois covers, requiring some lower bound on the genus of the base curve $C'$. Our argument allows to take into account lower genus $g'$, too. In fact, our proof proceeds by induction on the genus $g'$. In the case $g'=0$, we show that $(\operatorname{Sp}^G)^{pos}\subseteq\Mon^0_{\R}$, where $(\operatorname{Sp}^G)^{pos}$ involves only the contributions in $(\Sp^G)^{der}$ coming from positive-dimensional eigenspaces (see Theorem \ref{tutti diversi}). The induction step is based on a degeneration argument and requires the study of Prym varieties associated with certain admissible coverings, which is carried out in Theorem \ref{iso prym}.  If $g'\geq 1$, we show that $(\Sp^G)^{der}=(\Sp^G)^{pos}$, yielding Theorem A. As a consequence, under the assumption of cyclic coverings with no repeating factors, our argument applies to all genera $g'\geq 1$, and in the case $g'\geq 4$ it provides an alternative proof of the maximality of the monodromy discussed in \cite{landesmaneco}.
\end{say}

\begin{say}
The maximality of the monodromy is a powerful property and it has strong consequences on the geometry of the moduli image of our families. In particular, our result has interesting applications to the Coleman-Oort conjecture. Denote by $\M_g$ the moduli space of smooth projective curves of genus $g$ over $\mathbb{C}$, by $\mathcal{A}_g$ the moduli space of $g$-dimensional principally polarized abelian varieties, and let $j: \mathcal{M}_g \to \mathcal{A}_g$ be Torelli map sending curves to their Jacobians. The \emph{Coleman-Oort conjecture} concerns the interaction between the image of the Torelli map and the special subvarieties of $\mathcal{A}_g$, which are defined as Hodge loci associated with the natural rational variation of Hodge structure on $\mathcal{A}_g$. Special subvarieties are totally geodesic, namely they are linear with respect to the locally symmetric metric induced from the Siegel space. The conjecture predicts that for $g \gg 0$, there are no positive-dimensional special subvarieties $S \subset \mathcal{A}_g$ such that $S$ is generically contained in the Torelli locus, i.e. $S \subset \overline{j(\mathcal{M}_g)}$ and $S \cap j(\mathcal{M}_g) \neq \emptyset$. However, for low genus, counterexamples are known: there exist positive dimensional special subvarieties $S\subset \mathcal{A}_g$ that lie generically in the Torelli locus. All known examples occur for $g \leq 7$, and arise from families of Galois covers of curves. Several such examples have been constructed and studied in the literature; see for instance \cite{dejong-noot, rohde, moonen-special,fgp, fpp, fgs, gm1, mohajer-zuo-paa, ire, ct}; we recall their main construction in section \ref{sec:colemanoort}. The classification of such examples in case of cyclic covering of $\PP$ is due to Moonen (\cite{moonen-special}). The complete list of all currently known examples of special families of Galois coverings is provided in \cite{fgp, fpp}. Our second result provides new evidence for the Coleman-Oort conjecture.

%\begin{mainteoB}
%The following statements hold:
%\begin{itemize}

%    \item[1.]  All known examples of special families of cyclic coverings (namely, the ones appearing in \cite{fgp, fpp}) have no repeating factors in the monodromy.
    
%    \item[2.] The only positive dimensional totally geodesic families of cyclic coverings with no repeating factors in the monodromy are the known ones. In particular, for $g\geq 8$ there are no totally geodesic families of cyclic coverings with no repeating factors in the monodromy.

%  
%\end{itemize}
%\end{mainteoB}

\begin{mainteoB}
For $g\geq 8$ there are no totally geodesic (in particular, no special) families of cyclic coverings with no repeating factors in the monodromy. 
\end{mainteoB}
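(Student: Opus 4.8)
The plan is to derive Theorem B from Theorem A by exploiting the tight link between maximality of the algebraic monodromy group and the failure of a family to be totally geodesic.

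\smallskip

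\textbf{Reduction to a dimension count.} First I would recall the standard criterion, due to Moonen and Oort, characterizing when a subvariety of $\ag$ (or a family over a base $\B$) is totally geodesic. A family is totally geodesic precisely when its image in $\ag$ is a special subvariety, and the latter forces the algebraic monodromy group $\Mon^0$ to be much smaller than the full centralizer group: the second fundamental form of the period map must vanish. Concretely, if the image is totally geodesic, then the tangent space to $\B$ must be annihilated by the second fundamental form $\rho$ of the embedding into the Siegel space, and this in turn bounds the dimension of $\B$ from above by a quantity computed from the Hodge-theoretic data, namely the multiplicities $m_\chi, m_{\bar\chi}$ entering the decomposition \eqref{spG}. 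Thus the strategy is: (i) invoke Theorem A to conclude that for a family of cyclic $G$-coverings with no repeating factors and $g'\geq 1$, the monodromy is maximal; (ii) show that maximality of the monodromy is incompatible with being totally geodesic once $g\geq 8$.

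\smallskip

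\textbf{The two cases $g'\geq 1$ and $g'=0$.} For $g'\geq 1$ I would apply Theorem A directly: the monodromy equals $(\Sp(H^1(C_b,\R))^G)^{der}$, which is a product of simple factors of the form appearing in \eqref{spG}. A totally geodesic family whose monodromy is this full group would force the ambient special subvariety to be (a quotient of) the symmetric space of $(\Sp^G)^{der}$ itself, i.e. the family would have to fill up the whole Hodge-generic locus; comparing the dimension of the corresponding Shimura subdatum with the dimension of the base $\B$ of the family of $G$-coverings (which is $2g'-2+r$ up to the usual corrections, where $r$ is the number of branch points) yields a contradiction for $g\geq 8$. For $g'=0$ the families are Moonen's cyclic covers of $\PP$, which are fully classified; here I would simply quote that Moonen's list contains no examples with $g\geq 8$ satisfying the no-repeating-factors hypothesis, or alternatively run the same dimension comparison using the genus-zero maximality statement $(\Sp^G)^{pos}\subseteq \Mon^0_\R$ of Theorem \ref{tutti diversi}.

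\smallskip

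\textbf{The dimension estimate and the main obstacle.} The technical heart is the inequality that makes the genus bound $g\geq 8$ appear. I would express $\dim \B$ in terms of the branching data and compare it to the dimension of the image of the second fundamental form, equivalently to the rank constraints that total geodesy imposes on each factor $\U(m_\chi,m_{\bar\chi})$ and $\Sp(2m_\chi,\R)$ in \eqref{spG}. The point is that maximal monodromy makes the period map's image transverse to the totally geodesic directions in too many factors simultaneously, and for $g\geq 8$ there is no assignment of multiplicities $\{m_\chi,m_{\bar\chi}\}$ (all distinct, by the no-repeating-factors hypothesis) compatible with both $\sum (m_\chi+m_{\bar\chi}) = g$ and the vanishing of the second fundamental form. \emph{The hardest step} is carrying out this combinatorial optimization over all admissible character data: one must show that the constraint coming from maximal monodromy together with the no-repeating-factors condition leaves no room for a totally geodesic direction once the total genus crosses the threshold $8$, matching the known sharpness that examples do exist up to $g=7$. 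I expect this to require a careful case analysis of the possible eigenspace multiplicity patterns, organized by the number of nontrivial characters of the cyclic group $G$.
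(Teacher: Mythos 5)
Your step (i) matches the paper's strategy: Theorem A forces the smallest totally geodesic subvariety $X_f$ and the smallest special subvariety $S_f$ containing the image to both equal $S(G)$ (this is Proposition \ref{necessità}), so the family is totally geodesic iff it is special iff the numerical condition \eqref{star}, $\dim\Z=\dim S(G)$, holds. One caveat: the equivalence ``totally geodesic $\Leftrightarrow$ special'' is \emph{not} a standard Moonen--Oort criterion as you assert --- the paper explicitly notes that not every totally geodesic subvariety of $\A_g$ is special --- it is a consequence of the maximality of the monodromy, so the logical order matters.

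The genuine gap is in step (ii). You propose to rule out $g\geq 8$ via the second fundamental form and a ``combinatorial optimization over all admissible character data,'' but you produce no inequality and no reason why $8$ is the threshold; you yourself flag this as the hardest step and leave it unexecuted. The paper does not carry out any such analysis: it imports the classification results from the literature --- \cite{fgs} for $g'\geq 2$ (no family satisfies \eqref{star}), Moonen's classification \cite{moonen-special} for $g'=0$, and the computational classification of \cite{cgp} for $2\leq g\leq 100$. More seriously, your plan has nothing for $g'=1$: \cite{fgs} only covers $g'\geq 2$ and Moonen's list only covers $g'=0$, so this case cannot be absorbed into ``apply Theorem A and count dimensions.'' The paper handles it with a dedicated degeneration argument (Proposition \ref{genus1}, resting on Theorem \ref{iso prym} about Prym varieties of admissible covers): if a family over genus-one curves satisfies \eqref{star}, then so does the normalization of its $\Delta_0$-degeneration, which is a family of cyclic covers of $\mathbb{P}^1$ and hence appears in Moonen's list; since those have genus $\leq 7$ one gets $g\leq 8$, and \cite{cgp} then excludes $g=8$. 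Without this degeneration step --- or an actual execution of your combinatorial analysis for $g'=1$ --- the proof is incomplete.
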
    
(See Theorem \ref{classificazione}). More precisely, we prove that the only positive dimensional totally geodesic families of cyclic coverings with no repeating factors in the monodromy are among the known ones (namely, among the ones appearing in \cite{fgp, fpp}).  In other words, we provide a classification of all totally geodesic families of cyclic coverings with no repeating factors in the monodromy for all $g'\geq 0$. 
Theorem B follows combining Theorem A with the fact that the smallest totally geodesic subvariety containing the moduli image of a family is given by (the image in $\A_g$) of the orbit of $\Mon^{0,ad}_{\R}$.
We point out that, along with the proof, we also show that families with no repeating factors in the monodromy are special if and only if they are totally geodesic (see Proposition \ref{necessità}). Moreover, we observe that, even though assuming no repeating factors appears as a strong condition on the monodromy, we have:

\begin{prop*}
All known examples of special families of cyclic coverings (namely, the ones appearing in \cite{fgp, fpp}) have no repeating factors in the monodromy.
\end{prop*}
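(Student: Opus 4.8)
The proof will be a direct verification against the explicit classification: the special families of cyclic coverings form a finite list, recorded in the tables of \cite{fgp, fpp} (resting, for coverings of $\PP$, on Moonen's classification \cite{moonen-special}). Each entry is specified by its numerical data, namely a cyclic group $G\simeq \Zeta/n\Zeta$ together with a branching datum $(a_1,\dots,a_r)$, $a_i\in\Zeta/n\Zeta$, $\sum_i a_i\equiv 0 \pmod n$, encoding the local monodromies. The point is that the quantities entering the no-repeating-factors condition are computable purely from these data: writing $\chi_j$ for the character of $G$ with eigenvalue $e^{2\pi i j/n}$, the multiplicity $m_{\chi_j}$ of the $\chi_j$-eigenspace in $H^{1,0}(C_b)$ is given by the Chevalley--Weil type formula
\begin{equation*}
  m_{\chi_j} \;=\; -1+\sum_{i=1}^{r}\fracpart{\frac{j\,a_i}{n}},
\end{equation*}
in the genus-zero case (where all known examples live), with complex conjugation acting on characters by $\chi_j\mapsto\bar\chi_j=\chi_{n-j}$.

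Given these multiplicities, for each family I would single out the characters $\chi$ with $m_{\chi}\neq 0$ and $m_{\bar\chi}\neq 0$: these are exactly the ones indexing the noncompact unitary factors $\U(m_\chi,m_{\bar\chi})$ appearing in the decomposition \eqref{spG}, and they are the only factors relevant to the condition. I would then form, for each family, the collection of unordered pairs $\{m_\chi,m_{\bar\chi}\}$ attached to these characters, and check that they are pairwise distinct. Since the tables are finite, this reduces to running through the entries one by one, tabulating the pairs, and verifying distinctness by inspection.

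The argument carries no conceptual difficulty; it is a finite, if somewhat laborious, bookkeeping verification, and the only real care is in the conventions. One must discard the characters with $m_\chi=0$ or $m_{\bar\chi}=0$, since these yield compact factors $\U(m_\chi,0)$ that do not enter the comparison, and likewise the self-conjugate characters $\chi=\bar\chi$, which contribute the symplectic factors $\Sp(2m_\chi,\R)$ and are never matched against a unitary factor. One must also fix a single generator of $G$ throughout, so that the involution $\chi_j\mapsto\chi_{n-j}$ is consistently the one induced by conjugation. With these conventions in place, each family produces only a small number of noncompact unitary factors, so distinctness of the multiplicity pairs is immediate; the sole genuine labor is in the entries with larger $n$, where more eigenspaces must be tracked. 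I expect the main obstacle to be purely organizational---ensuring the tabulation is exhaustive over the full list of \cite{fgp, fpp}---rather than mathematical.
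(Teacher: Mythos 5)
Your proposal is a legitimate route to the statement, but it is not the one the paper takes, and as written it has one concrete gap of coverage. The paper argues structurally rather than by tabulation: for the one-dimensional special families the uniformizing symmetric space of $S(G)$ is $\sieg_1$, so $(\operatorname{Sp}^G)^{pos}$ consists of a single factor and the condition is automatic; for double and triple covers the groups $\Zeta/2\Zeta$ and $\Zeta/3\Zeta$ admit only one nontrivial character (respectively one conjugate pair), which is worked out in section \ref{sec:doubleandtriple}; and for the remaining higher-dimensional families it invokes \cite[Theorem 1.2]{ct}, which shows that the uniformizing symmetric space is irreducible, forcing $(\operatorname{Sp}^G)^{pos}$ to have a single factor. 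This avoids any entry-by-entry computation. Your direct verification via Chevalley--Weil on the tables of \cite{fgp,fpp} would also establish the result and is more self-contained (it does not lean on \cite{ct}), but it buys no insight and costs the full bookkeeping you describe.

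The gap: you assert that all known examples live in genus zero and accordingly use the formula $m_{\chi_j}=-1+\sum_i\fracpart{j a_i/n}$. The four families of \cite{fpp} are coverings of elliptic curves, i.e.\ $g'=1$, where \eqref{Chevalley} gives $m_{\chi_j}=\sum_i\fracpart{j a_i/n}$ with no $-1$; applying your genus-zero formula to those entries would yield wrong multiplicities, so your tabulation as described would not be exhaustive or correct on exactly the part of the list that is not already covered by Moonen's classification. A smaller point: while self-conjugate characters are indeed never compared with unitary factors, the definition does require distinct self-conjugate characters $\chi_i=\bar\chi_i$, $\chi_j=\bar\chi_j$ to give distinct factors $\Sp(2m_{\chi_i},\R)\neq\Sp(2m_{\chi_j},\R)$; for cyclic $G$ this is vacuous because only $\chi_{n/2}$ (for $n$ even) is nontrivial and self-conjugate, but that observation should be made explicit rather than the symplectic factors being simply discarded.
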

(See Theorem \ref{no repeating moonen}). 
\end{say}

\begin{say}
    Simple examples of families with no repeating factors in the monodromy are provided by families of double and triple coverings. Let $\mathcal{B}_{g,h}$ be the image in $\M_g$ of the family of all double covers $f: C\ra C'$ with $g(C)=g$ and $g(C')=h$. Likewise, let $\mathcal{T}_{g,h}(r,m)$ be the image in $\mg$ of the family of all Galois  triple coverings $f:C\ra C'$ with $g(C)=g,\ g(C')=h $, $r=2m+n$ ramification points and fixed monodromy datum $
  \Theta=(x_1, \ldots, x_{2m}, y_1, \ldots, y_n)$, where $x_i+x_{i+1}=0$ mod $3$ and $y_i+y_j\neq 0$ mod $3$ for all $i$ and $j$. Our third result is: 
  \begin{mainteoC}The double covering locus $\mathcal{B}_{g,h}$ is not totally geodesic unless $$(g,h)= (1,0), (2,0), (2,1), (3,1).$$
  The triple covering locus $\mathcal{T}_{g,h}(r,m)$ is not totally geodesic unless 
  $$(g,h,r,m)=(2,0,4,2), (3, 0, 5, 1), (4,0,6,0), (3,1, 2, 1), (4,1,3,0).$$
  \end{mainteoC}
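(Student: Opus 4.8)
\emph{The plan} is to reduce Theorem C to a pure dimension count, using maximality of the monodromy. First I would note that both loci are moduli images of families with no repeating factors: a double cover has $G=\Zmod$, whose unique nontrivial character is self-conjugate, while a Galois triple cover has $G=\Zeta/3\Zeta$ with a single pair of conjugate characters, so in each case the decomposition \eqref{spG} contains at most one symplectic and one unitary factor. Hence Theorem \ref{main} (for $h=g'\geq 1$) and the genus-zero Theorem \ref{tutti diversi} (for $h=0$) apply and yield maximal monodromy $\Mon^0_{\R}=(\Sp(H^1(C_b,\R))^G)^{der}$. I would then invoke, as in the proof of Theorem B (Proposition \ref{necessità}), that the smallest totally geodesic subvariety $Z$ containing the moduli image is the image in $\ag$ of the orbit of $\Mon^{0,ad}_{\R}$; since the relevant Hurwitz-type parameter space $\B$ maps to $\Mg$ with generically finite fibers, the moduli image has dimension $\dim\B$ and is contained in $Z$. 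Therefore each locus is totally geodesic if and only if $\dim\B=\dim Z$, and everything reduces to computing these two numbers.

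Next I would compute both dimensions. On the source side $\dim\B=3h-3+r$. On the target side, maximality together with the holomorphicity of the period embedding turns \eqref{spG} into
\begin{equation*}
\dim Z=\sum_{(\chi,\bar\chi),\,\chi\neq\bar\chi}m_\chi m_{\bar\chi}+\sum_{(\chi,\bar\chi),\,\chi=\bar\chi}\frac{m_\chi(m_\chi+1)}{2},
\end{equation*}
where the multiplicities $m_\chi$ are read off the monodromy datum by the Chevalley--Weil formula. For a double cover the self-conjugate characters contribute $m_{\chi_0}=h$ and $m_{\chi_1}=g-h$, so $\dim Z=\frac{h(h+1)}{2}+\frac{(g-h)(g-h+1)}{2}$, while Riemann--Hurwitz gives $r=2g-4h+2$, hence $\dim\B=2g-h-1$. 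For the triple cover the datum $\Theta$ forces $m$ local pairs $\{1,2\}$ and $n$ equal marks, whence $m_{\chi_0}=h$ together with the conjugate pair $m_{\chi_1}=h-1+m+\frac{n}{3}$ and $m_{\chi_2}=h-1+m+\frac{2n}{3}$, so that $\dim Z=m_{\chi_1}m_{\chi_2}+\frac{h(h+1)}{2}$.

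I would then solve $\dim\B=\dim Z$ in the discrete parameters. In both cases $\dim Z$ is quadratic in the branch data whereas $\dim\B$ is linear, so equality can hold only for finitely many tuples (which also recovers the bound $g\geq 8$ of Theorem B); a direct inspection of the remaining small cases should return exactly $(g,h)\in\{(1,0),(2,0),(2,1),(3,1)\}$ for double covers and $(g,h,r,m)\in\{(2,0,4,2),(3,0,5,1),(4,0,6,0),(3,1,2,1),(4,1,3,0)\}$ for triple covers, with $\dim\B<\dim Z$ in all other cases.

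The hard part will not be the arithmetic but the geometric input guaranteeing $\dim(\mathrm{image})=\dim\B$. I must verify that the moduli map is generically finite (the generic member having automorphism group exactly $G$) and treat the degenerate low-genus cases by hand: for instance $\mathcal{B}_{1,0}$ and $\mathcal{B}_{2,0}$ fill all of $\Mg$ (every curve of genus $\leq 2$ being hyperelliptic) and are trivially totally geodesic, and in the balanced case $g=2h$ (e.g. $(2,1)$) I must check that maximal monodromy still holds even though the two symplectic factors have equal rank—here one uses that moving the branch points with $C_b'$ fixed already produces a factor $\{1\}\times\Sp(2(g-h),\R)$, which rules out a diagonal monodromy. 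Once these points are secured, the Diophantine classification is elementary.
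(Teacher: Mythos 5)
Your proposal is correct and follows essentially the same route as the paper: both reduce Theorem C to the observation that $\Zmod$ and $\Zeta/3\Zeta$ automatically have no repeating factors, apply Proposition \ref{necessità} to get $X_f=S(G)$, compute the multiplicities via Chevalley--Weil, and compare $\dim\B$ with $\dim S(G)$ (the paper carries this out in Corollaries \ref{double}, \ref{teo trig non ram}, \ref{triple-with-ram}, outsourcing the final enumeration for $h\geq 2$ to \cite{fgs} and for $h=0,1$ to the known lists, whereas you do the quadratic-versus-linear count directly --- both work). One small imprecision: for $h=0$ Theorem \ref{tutti diversi} gives only $(\Sp^G)^{pos}\subseteq\Mon^0_\R$ rather than literal maximality (cf.\ Remark \ref{g'=0}), but Proposition \ref{necessità} shows the orbit dimensions of $(\Sp^G)^{pos}$ and $\Sp^G$ coincide, so your dimension count is unaffected; your explicit worry about a diagonal monodromy in the balanced case $g=2h$ is a point the paper does not discuss and your proposed fix is sound.
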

 (See Corollaries \ref{double}, \ref{teo trig non ram}, \ref{triple-with-ram}). The case of the bielliptic locus $\mathcal{B}_{g,1}$ was already considered in \cite{fp}, where, with a completely different technique, the authors show that it is not totally geodesic for $g\geq 4$ (while it is for $g=3$). Our Theorem $C$ extends this result to all double and triple covering loci.

 The paper is organized as follows: section \ref{sec:preliminaries} recalls some basic facts on families of coverings, section \ref{sec:admissibleandprym} discusses a result on Prym varieties of certain admissible coverings, section \ref{sec:maxmonodromy} proves Theorem A, while Theorem B is shown in section \ref{sec:colemanoort}. Finally, in section \ref{sec:doubleandtriple} we study families of double and triple coverings. \\
\end{say}

{\bfseries \noindent{Acknowledgements.}} We thank Matteo Costantini, Juan Carlos Naranjo, and Andr\'es Rojas for useful comments.

\section{Preliminaries on families of $G$-curves}\label{sec:preliminaries}
Let $G$ be a finite group and let $C, \widetilde{C}$ be smooth projective complex curves equipped with a $G$-action.  
We say that $C$ and $\widetilde{C}$ have the same \emph{topological type} (or are \emph{topologically equivalent}) if there exists an automorphism $\eta \in \operatorname{Aut}(G)$ together with an orientation–preserving homeomorphism
\[
f: C \to \widetilde{C}
\]
such that
\[
f(g\cdot x) = \eta(g)\cdot f(x), \qquad g \in G, \; x\in C.
\]
If, in addition, the map $f$ is biholomorphic, then $C$ and $\widetilde{C}$ are called \emph{$G$-isomorphic}.  

We denote by $\mathfrak{T}_{g',r}(G)$ the collection of topological types of $G$-actions for which the quotient $C':=C/G$ has genus $g'$ and the covering $C \to C'$ has exactly $r$ branch points.  
Given a fixed topological type $\tau \in \mathfrak{T}_{g',r}(G)$, there exists an algebraic family of curves with a $G$-action (see \cite{ganzdiez, GT2}),
\[
\pi : \mathcal{C}_\tau \longrightarrow \mathsf{B}_\tau ,
\]
with the following properties:
\begin{enumerate}
  \item every fiber in the family carries a $G$-action topologically equivalent to $\tau$;
  \item any curve endowed with a $G$-action of type $\tau$ is $G$-isomorphic to some fiber, and only finitely many fibers represent the same isomorphism class.
\end{enumerate}
%This family is not unique, however, it is uniquely determined up to the equivalence
%relation given by finite \'etale pull-backs (see \cite{GT2}). All families equivalent in this sense have
In the following, we denote by  $\mathsf{M}_\tau$ the image of the family in  $\mathcal{M}_g$ and by $\Z_{\tau}$ the closure of its image in $\A_g$ under the Torelli map.

It is a classical fact that topological types of $G$-actions on curves can be described
combinatorially in terms of monodromy and branching data. More concretely, fix a compact
Riemann surface $C'$ of genus $g' \geq 0$ and an ordered $r$-tuple of distinct points
$t=(t_1,\dots,t_r)$ on $C'$. Let $U_t := C' \setminus \{t_1,\dots,t_r\}$ and choose a base point
$t_0 \in U_t$. Then the fundamental group $\pi_1(U_t,t_0)$ admits a canonical presentation
\[
\pi_1(U_t,t_0)\;\cong\; 
\Gamma_{g',r} \;=\;
\big\langle \alpha_1,\beta_1,\dots,\alpha_{g'},\beta_{g'},\gamma_1,\dots,\gamma_r \;\big|\;
\prod_{i=1}^r \gamma_i \cdot \prod_{j=1}^{g'} [\alpha_j,\beta_j] = 1 \big\rangle,
\]
where $\alpha_j,\beta_j$ are simple loops generating a symplectic basis of $H_1(C',\mathbb{Z})$,
and $\gamma_i$ is a small loop around the puncture $t_i$, defined with respect to a chosen
system of arcs $\tilde{\gamma}_i$ from $t_0$ to $t_i$.

Now suppose $f:C \to C'$ is a $G$-Galois covering branched over $t$. Writing
$V = f^{-1}(U_t)$, the restriction $f|_V : V \to U_t$ is an unramified Galois covering
with deck transformation group $G$. Hence there is a surjective homomorphism
\[
\pi_1(U_t,t_0) \twoheadrightarrow G,
\]
well-defined up to inner automorphisms of $G$. Equivalently, we obtain an epimorphism
\[
\theta : \Gamma_{g',r} \twoheadrightarrow G.
\]
If $m_i$ denotes the order of the local monodromy around $t_i$, we set
$m = (m_1,\dots,m_r)$, and we recall the following notion.

\begin{defin}
A \emph{datum} is a triple $(m,G,\theta)$, where
\begin{itemize}
\item $m = (m_1,\dots,m_r)$ is an $r$-tuple of integers $m_i \geq 2$;
\item $G$ is a finite group;
\item $\theta : \Gamma_{g',r} \twoheadrightarrow G$ is an epimorphism such that $\theta_i:=\theta(\gamma_i)$ has order $m_i$ for each $i$.
\end{itemize}
\end{defin}
The topological type of a $G$-action on a curve is thus encoded by such a datum:
the group $G$, the local monodromy orders $m_i$, and the epimorphism $\theta$.
As a consequence, the classification of topological types reduces to the combinatorics of generating systems of $G$ subject to these relations.

Finally, let us recall that for any fiber $C_b,\  b\in \B_\tau$ of the family, the group action induces an action of $G$ on holomorphic one-forms:
\[\rho :G\ra \Gl(H^0(C_b,K_{C_b})), \;  \rho(g)(\omega)=g.\omega:=(g^{-1})_*(\omega).\]
Notice that the equivalence class of $\rho$ does not depend on the point $b\in B_\tau$. The Chevalley-Weil formula computes the dimension $m_\chi$ of the $\chi$-eigenspace for the action of $G$ on $H^{0}(C, K_C)$, where $\chi$ is a $\C$-irreducible representation of $G$. In particular, for a family of cyclic coverings of order $d$, with monodromy $\Theta=(\theta_1, \ldots, \theta_r)$ and genus of the base curve $g'$, the formula reads as follows (\cite{cw}): 
\begin{equation}\label{Chevalley}
    m_{\chi_i} = (g' - 1) +\sum_{i=1}^r\frac{[i\theta_i]_d}{d}+\varepsilon,
\end{equation}
where $\chi_i$ is the representation associated with the eigenvalue $\xi^i$, $\xi$ is a primitive $d$-th root of the unity, $[a]_d$ denotes the unique representative of $a\in \mathbb{Z}/d\mathbb{Z}$ in $\{0, ...,d-1\}$, and $\varepsilon$ is $1$ when $\chi$ is the trivial representation and $0$ otherwise.

\section{An admissible covering and its Prym variety}\label{sec:admissibleandprym}
In this section, we consider admissible \(G\)-coverings of connected stable curves of arithmetic genus \(g'\) and we investigate their associated Prym varieties. We start by recalling the definition. 
\begin{defin}
    Let $\mathcal{X} \ra S$ be a family of connected nodal curves of arithmetic genus g and let $G$ be a finite group. A $G$-covering $f: \mathcal{Z}\ra \mathcal{X}$ is called a family of admissible $G$-coverings if it satisfies
    \begin{itemize}
        \item[1)]  the composition $\mathcal{Z}\ra \mathcal{X} $ is a family of nodal curves;
\item[2)] every node of a fiber of $\mathcal{Z}\ra S$ maps to a node of the corresponding fiber of
$\mathcal{X} \ra S$;
\item[3)] $\mathcal{Z}\ra \mathcal{X}$ is a principal $G$-bundle away from the nodes,
\item[4)] if the node $z$ lies over the node $x$ of the corresponding fibre of $\mathcal{X} \ra S$, and $z_1$ and $z_2$ are local coordinates of the two branches near $z$, then the generator $h$ of the stabilizer $Stab_G(z)$ acts as
\begin{equation}\label{gluing}
    h.(z_1, z_2)=(\zeta z_1, \zeta^{-1} z_2),
\end{equation}
where $d=\operatorname{ord}(h)$ and $\zeta $ is a primitive $d$-th root of the unity.
   \end{itemize}
\end{defin}
When $S = \operatorname{Spec}(\mathbb{C})$, we say that $f: Z\ra X$ is an admissible $G$-cover. In this case, let $\tilde{f}: \tilde Z\ra \tilde{X}$ be the lifting of $f$ on the normalisations. Then for each node of $Z$, condition $(4)$ is a compatibility condition for the local monodromies of $\tilde{f}$ at the two points in $\tilde Z$ lying above the node in $Z$.

Now, let \(G\) be a cyclic group of order \(d\) with generator \(\sigma\). In the following, let
\[
f_D : \widetilde{D} \longrightarrow D,
\]
be the admissible $G$-cover satisfying the following:
\begin{itemize}
    \item the curves $\widetilde{D}$ and $D$ are connected stable curves of genus $g$ and $g'$, respectively;
    \item  the curve \(\widetilde{D}\) has exactly one node, and this node is fixed by the action of \(G\);
    \item the normalization of \(f_D\) is the \(G\)-cover
\[
f_D^{\nu} : \widetilde{D}^{\nu} \longrightarrow D^{\nu},
\]
with monodromy data $\Theta^{\nu} = (1, -1, \theta_1, \dots, \theta_r).$
\end{itemize}
More concretely, let \(p_1, p_2 \in D^{\nu}\) be the two preimages of the node of \(D\).  These points are the branch points of the normalized cover \(f_D^{\nu}\), with local monodromies \(1\) and \(-1\), respectively.
The admissible cover \(f_D\) is obtained from its normalization \(f_D^{\nu}\) by gluing the points \(p_1\) and \(p_2\) in \(D^{\nu}\) to form the node of \(D\), and simultaneously gluing their corresponding preimages in $\tilde D\nu$. Note that we have $g(\tilde D^{\nu})=g-1$ and $g(D^{\nu})=g'-1$. An example of such an admissible cover is presented in Figure \ref{fig Admissible 1}. \\
\vspace{1cm}

\tikzset{every picture/.style={line width=0.75pt}}\label{fig Admissible 1} %set default line width to 0.75pt    
\begin{figure}[h]
\begin{tikzpicture}[x=0.75pt,y=0.75pt,yscale=-1,xscale=1]
%uncomment if require: \path (0,300); %set diagram left start at 0, and has height of 300

%Curve Lines [id:da10694186112411119] 
\draw    (132.8,227.6) .. controls (221.8,269.6) and (77.8,276.6) .. (159.8,229.6) ;
%Curve Lines [id:da8731953863541714] 
\draw    (32.8,178.6) .. controls (49.11,181.95) and (51.62,177.73) .. (52.8,171.6) .. controls (53.98,165.47) and (46.93,158.62) .. (31.8,165.6) ;
%Curve Lines [id:da07561019270983649] 
\draw    (126.8,166.6) .. controls (165.8,174.6) and (153.8,134.6) .. (127.8,146.6) ;
%Curve Lines [id:da3282868994521574] 
\draw    (127.8,180.6) .. controls (166.8,188.6) and (154.8,117.6) .. (128.8,129.6) ;

%Curve Lines [id:da9499029984048009] 
\draw    (177.7,143.7) .. controls (138.61,136.18) and (151.09,176.03) .. (176.95,163.71) ;
%Curve Lines [id:da1346359420918828] 
\draw    (176.53,129.72) .. controls (137.44,122.19) and (150.3,193.04) .. (176.15,180.72) ;

%Curve Lines [id:da9536996002618816] 
\draw    (159.8,229.6) .. controls (216.8,215.6) and (250.8,225.6) .. (259.8,229.6) ;
%Curve Lines [id:da8051650012643536] 
\draw    (5.8,227.6) .. controls (63.8,217.6) and (123.8,223.6) .. (132.8,227.6) ;
%Curve Lines [id:da5362018953454217] 
\draw    (75.8,162.81) .. controls (96.83,165.77) and (90.36,150.96) .. (76.34,155.4) ;
%Curve Lines [id:da5475677960173342] 
\draw    (76.34,168) .. controls (97.37,170.96) and (90.9,144.66) .. (76.88,149.11) ;

%Curve Lines [id:da1973450589662621] 
\draw    (30.8,145.6) .. controls (47.11,148.95) and (49.62,144.73) .. (50.8,138.6) .. controls (51.98,132.47) and (44.93,125.62) .. (29.8,132.6) ;
%Straight Lines [id:da7444079551017532] 
\draw    (238.8,179.6) -- (239.93,210) ;
\draw [shift={(240,212)}, rotate = 267.88] [color={rgb, 255:red, 0; green, 0; blue, 0 }  ][line width=0.75]    (10.93,-3.29) .. controls (6.95,-1.4) and (3.31,-0.3) .. (0,0) .. controls (3.31,0.3) and (6.95,1.4) .. (10.93,3.29)   ;
%Straight Lines [id:da9063706015279] 
\draw    (421.8,230.6) -- (609.8,231.6) ;
%Straight Lines [id:da5674077052532583] 
\draw    (366.8,157.6) -- (320.8,158.56) ;
\draw [shift={(318.8,158.6)}, rotate = 358.81] [color={rgb, 255:red, 0; green, 0; blue, 0 }  ][line width=0.75]    (10.93,-3.29) .. controls (6.95,-1.4) and (3.31,-0.3) .. (0,0) .. controls (3.31,0.3) and (6.95,1.4) .. (10.93,3.29)   ;
%Curve Lines [id:da8532033552065925] 
\draw    (435.8,192.6) .. controls (452.11,195.95) and (454.62,191.73) .. (455.8,185.6) .. controls (456.98,179.47) and (449.93,172.62) .. (434.8,179.6) ;
%Curve Lines [id:da857699492241884] 
\draw    (432.8,163.6) .. controls (449.11,166.95) and (451.62,162.73) .. (452.8,156.6) .. controls (453.98,150.47) and (446.93,143.62) .. (431.8,150.6) ;
%Curve Lines [id:da8688727536020753] 
\draw    (480.8,172.81) .. controls (501.83,175.77) and (495.36,160.96) .. (481.34,165.4) ;
%Curve Lines [id:da40734841586407944] 
\draw    (481.34,178) .. controls (502.37,180.96) and (495.9,154.66) .. (481.88,159.11) ;

%Curve Lines [id:da2082122822205411] 
\draw    (520.8,173.98) .. controls (544.83,177.4) and (537.43,160.32) .. (521.42,165.44) ;
%Curve Lines [id:da47246387893495034] 
\draw    (521.42,179.96) .. controls (545.44,183.38) and (538.05,153.06) .. (522.03,158.18) ;

%Curve Lines [id:da4953345179879943] 
\draw    (578.77,164.73) .. controls (554.7,161.66) and (562.34,178.63) .. (578.28,173.28) ;
%Curve Lines [id:da5497654581726839] 
\draw    (578.07,158.76) .. controls (554,155.7) and (561.83,185.9) .. (577.77,180.55) ;

%Straight Lines [id:da77894202309081] 
\draw  [dash pattern={on 0.84pt off 2.51pt}]  (536.8,166.6) -- (536.8,227.6) ;
\draw [shift={(536.8,229.6)}, rotate = 270] [color={rgb, 255:red, 0; green, 0; blue, 0 }  ][line width=0.75]    (10.93,-3.29) .. controls (6.95,-1.4) and (3.31,-0.3) .. (0,0) .. controls (3.31,0.3) and (6.95,1.4) .. (10.93,3.29)   ;
%Straight Lines [id:da8092348305189917] 
\draw  [dash pattern={on 0.84pt off 2.51pt}]  (561.8,167.6) -- (561.8,228.6) ;
\draw [shift={(561.8,230.6)}, rotate = 270] [color={rgb, 255:red, 0; green, 0; blue, 0 }  ][line width=0.75]    (10.93,-3.29) .. controls (6.95,-1.4) and (3.31,-0.3) .. (0,0) .. controls (3.31,0.3) and (6.95,1.4) .. (10.93,3.29)   ;

% Text Node
\draw (251,183) node [anchor=north west][inner sep=0.75pt]   [align=left] {d:1};
% Text Node
\draw (527,236) node [anchor=north west][inner sep=0.75pt]   [align=left] {p1};
% Text Node
\draw (554,236) node [anchor=north west][inner sep=0.75pt]   [align=left] {p2};

\end{tikzpicture}
\caption{}{Example of a $4:1$ admissible cover (left), where the sheets are glued according to \eqref{gluing}, and its normalization (right).    }
	\label{fig Admissible 1}
\end{figure}
\vspace{1cm}

\noindent{We now study the Prym variety associated with the covering $f_D$, inspired by \cite{beauville, faber, langeortega}.}
At the node $s$ of $\tilde D$, we make the usual identification:
\[\mathcal{K}^*_s/\OO_s^*\cong \C^*\times \Zeta\times \Zeta,\]
and the induced action by $\sigma$ works as:
\[\sigma^*((z, m,n)_s)=(\rho^{(m+n)}z, m,n)_s,\]
where $\rho$ is a primitive $d$-th root of the unity. We start with the following:
\begin{lemma}\label{line bundles in P}
    Let $L\in \Pic(\tilde D)$ such that $\Nm(L)\simeq \OO_{D} $. Then $$L\simeq M\otimes \sigma^*M^{-1},$$
    for some $M\in \Pic(\tilde D)$.
    \begin{proof}
         We need to describe divisors $H$ such that $L\simeq \OO_{\tilde D}(H)$ and $(f_D)_*(H)=0$. Writing $H= \sum_{x\in \tilde D_{reg}}x+(z, m,n)_s$, we get that $H$ must be a linear combination of two kinds of contributions:  
from smooth points $x\in \tilde D_{\mathrm{reg}}$, divisors of the form $x-\sigma^j x$ ($j\in [0,\ldots, d-1]$), and at the singular point $s\in \tilde D$, divisors of the form $(\rho^k,0,0)_s$. Indeed, using that
        \begin{gather*}
            (f_D)_*((z,m,n)_s)=(\Pi_{i=0}^{d-1}(\sigma^i)^*z, m, n)_s=\begin{cases}
                (z^d, m,n)_{f(s)}\quad\quad\quad \text{if $d$ odd}\\
                ((-1)^{(m+n)}z^d, m,n)_{f(s)}\quad \text{if $d$ even}
            \end{cases} 
        \end{gather*}
        we have that  
        \[(f_D)_*((\rho^k, 0, 0)_s)=(1,0,0)_{f(s)}.\]
        Since $(\rho^k, 0, 0)_s=\sum (\rho, 0, 0)_s$, it suffices to show that $(\rho, 0, 0)_s$ is in the image of $1-\sigma^*.$ This follows from \[(1,-1,0)-\sigma^*(1,-1,0)=(\rho, 0, 0).\]
        Hence $H= E-\sigma^*E, $ for some divisor $E\in \tilde D$. 

% Now Beauville shows that we can actually modify H with torsor $\pi^*N$, for $N$ a line bundle on $C$ to get that the degree of $H$ is equal to 0 or to 1 (sto portando indietro con $\pi^*$ che ha ordine 2 quindi posso sottrarre cose pari--> classe di congruenza finale sarà 0 o 1). Le componenti connesse corrispondono a caratteri di G e quindi $\pi(Ker Nm)= Z/n/e$ dove e è l'odrine di ramificazione. Se ho un punto di ramificazione totale allora il nucleo è connesso. 
        
    \end{proof}
\end{lemma}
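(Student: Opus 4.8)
The plan is to reduce the statement to a computation with divisors on the nodal curve $\tilde D$, treating the contribution at the node $s$ separately from the contributions at smooth points. I first record that the containment I actually need is only one inclusion: since $f_D\circ\sigma=f_D$ we have $\Nm\circ\sigma^*=\Nm$, so $\Nm(M\otimes\sigma^*M^{-1})=\Nm(M)\otimes\Nm(M)^{-1}\simeq\OO_D$ for every $M$, and the content of the lemma is therefore that every $L$ with $\Nm(L)\simeq\OO_D$ lies in the image of the endomorphism $(1-\sigma^*)\colon M\mapsto M\otimes\sigma^*M^{-1}$ of $\Pic(\tilde D)$.

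I would then write $L\simeq\OO_{\tilde D}(H)$ and first reduce to the case $(f_D)_*(H)=0$. The hypothesis $\Nm(L)\simeq\OO_D$ says precisely that $(f_D)_*(H)=\operatorname{div}_D(g)$ is principal; since modifying $H$ by $\operatorname{div}_{\tilde D}(\phi)$ changes $(f_D)_*(H)$ by $\operatorname{div}_D(\Nm(\phi))$, the reduction amounts to solving $\Nm(\phi)=g$ up to a scalar, i.e. to surjectivity of the field norm for the cyclic extension $\C(\tilde D)/\C(D)$. As $D$ is irreducible (its normalization $D^\nu$ is connected of genus $g'-1$), $\C(D)$ is a one-variable function field over $\C$, hence a $C_1$-field with trivial Brauer group, and the norm is surjective by Tsen's theorem; this gives the reduction.

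With $(f_D)_*(H)=0$, I split $H=H_{\mathrm{reg}}+(z,m,n)_s$ into its smooth part and its part at the node. Because $f_D$ carries smooth points to smooth points and the node $s$ to the node $f_D(s)$, the two pushforwards have disjoint support and must vanish separately. For the node part the formula $(f_D)_*((z,m,n)_s)=(\pm z^d,m,n)_{f_D(s)}$ forces $m=n=0$ and $z^d=1$, so the node contribution is $(\rho^k,0,0)_s$; using $\sigma^*((w,m,n)_s)=(\rho^{m+n}w,m,n)_s$ one checks $(\rho,0,0)_s=(1-\sigma^*)(1,-1,0)_s$, and the multiple $(\rho^k,0,0)_s$ is therefore in $\im(1-\sigma^*)$. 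For $H_{\mathrm{reg}}$ the condition $(f_D)_*(H_{\mathrm{reg}})=0$ means that on each fibre of $f_D$ the coefficients sum to zero; grouping the points into $G$-orbits and using this vanishing on each orbit, I rewrite $H_{\mathrm{reg}}$ as a $\Zeta$-combination of conjugate differences $x-\sigma^j x$, and a telescoping identity exhibits each such difference in $\im(1-\sigma^*)$. Combining the two parts yields $H=E-\sigma^*E$ for a divisor $E$, whence $L\simeq M\otimes\sigma^*M^{-1}$ with $M=\OO_{\tilde D}(E)$.

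The step I expect to be most delicate is the local analysis at the node. One must fix the identification $\mathcal{K}^*_s/\OO_s^*\cong\C^*\times\Zeta\times\Zeta$, determine precisely how $\sigma^*$ and $(f_D)_*$ act on the gluing parameter $z\in\C^*$ (including the sign $(-1)^{m+n}$ appearing for even $d$), and verify that the norm-trivial node data is genuinely reached by $1-\sigma^*$. By contrast the smooth-point part is the classical argument for cyclic covers and should be routine, and the reduction to $(f_D)_*(H)=0$, while requiring the Brauer-group input, is standard once $D$ is seen to be irreducible.
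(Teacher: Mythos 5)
Your proposal is correct and follows essentially the same route as the paper's proof: reduce to divisors $H$ with $(f_D)_*(H)=0$, treat the node via the local data $(z,m,n)_s$ and the identity $(1,-1,0)_s-\sigma^*(1,-1,0)_s=(\rho,0,0)_s$, and treat the smooth part via differences $x-\sigma^j x$ lying in $\im(1-\sigma^*)$. You additionally make explicit two steps the paper leaves implicit — the reduction to $(f_D)_*(H)=0$ via surjectivity of the norm on function fields (Tsen) and the fact that the node contribution must separately have trivial pushforward, forcing $m=n=0$ and $z^d=1$ — which is a welcome completion rather than a departure.
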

Let $P$ denote the kernel of the norm map $\Nm: J\tilde D\ra JD$. Lemma \ref{line bundles in P} implies that $P$ is the variety of line bundles in $\ker\Nm$ of the form $M\otimes\sigma^*M\meno$. The Prym variety $P(\tilde D, D)$ of $f_D$ is, by definition, the connected component of the identity of $P$. We have the following:
\begin{teo}\label{iso prym}
    The Prym variety $P(\tilde D, D )$ is isogenous to the Prym variety $P(\tilde D^{\nu}, D^{\nu} )$ associated with the normalized covering $\tilde D^{\nu}\ra D^{\nu}$. 
    \begin{proof}
        We consider the following diagram of commutative algebraic groups:
\begin{equation}\label{diagramma prym}
            \xymatrix{ 0\ar[r] &\tilde T\ar[d]_{\Nm}\ar[r] & J\tilde D\ar[d]_{\Nm}\ar[r]& J\tilde D^{\nu}\ar[d]_{\Nm}\ar[r] &0\\
        0\ar[r] & T\ar[r] & JD\ar[r]& JD^{\nu}\ar[r] &0
}
\end{equation}
where $\tilde T, T$ are the groups of classes of divisors of degree 0 and singular support. Since $(f_D)^*$ is injective on $T$ and $\Nm\circ (f_D)^*=d$, the norm on $\tilde T$ is surjective and $$\ker \Nm\restr{\tilde T}\cong \tilde T_d:=\{\text{points in $\tilde T$ whose order divides $d$}\}.$$ Hence, taking the kernels in diagram \eqref{diagramma prym}, we obtain the exact sequence: 
\begin{equation}\label{successione esatta Prym}
    0\ra \tilde T_d\ra P\ra R\ra 0,
\end{equation}
where $R:=\ker \Nm J\tilde D^{\nu}\ra JD^{\nu}$. Since $\tilde D^{\nu}\ra D^{\nu}$ is ramified, $R$ is an abelian variety, and indeed, it is the Prym variety $P(\tilde D^{\nu}, D^{\nu} ).$ Moreover, since $\dim \tilde T=\dim T=1$, we have that $\tilde T_d\cong \Zeta/d$. Therefore, using \eqref{successione esatta Prym}, we get that $P$ and $R$ are isogenous.  
    \end{proof}
\end{teo}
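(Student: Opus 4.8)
The plan is to relate the Prym of the admissible (nodal) cover to the Prym of its normalization through the exact sequences describing the generalized Jacobians of the two nodal curves. For a stable curve $X$ with normalization $X^\nu$, recall the exact sequence $0 \to T_X \to JX \to JX^\nu \to 0$, where the toric part $T_X$ is a torus of dimension equal to the number of nodes of $X$. Since both $\tilde D$ and $D$ have exactly one node, the tori $\tilde T := T_{\tilde D}$ and $T := T_D$ are one-dimensional, hence isomorphic to $\C^*$. Because the norm map is defined through pushforward of divisors, it is compatible with these sequences: it carries the toric part $\tilde T$ into $T$ and induces on the quotients the norm of the normalized cover $J\tilde D^\nu \to J D^\nu$. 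I would organize this into a commutative diagram whose two rows are the sequences above and whose three vertical arrows are the respective norm maps.

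The heart of the argument is the analysis of the norm on the toric parts. First I would check that the pullback $(f_D)^* \colon T \to \tilde T$ is injective and that $\Nm \circ (f_D)^* = [d]$, multiplication by $d$, as follows from the degree of the cover. Since $T \cong \C^*$ is divisible and $(f_D)^*$ is injective, this forces $\Nm|_{\tilde T}$ to be surjective with kernel isomorphic to the $d$-torsion $\tilde T_d \cong \Zeta/d\Zeta$. With surjectivity of the norm on the toric part in hand, I would apply the snake lemma to the diagram, taking kernels of the three vertical norm maps. This produces the short exact sequence $0 \to \tilde T_d \to P \to R \to 0$, where $P = \ker(\Nm \colon J\tilde D \to JD)$ is the group identified in Lemma \ref{line bundles in P} and $R := \ker(\Nm \colon J\tilde D^\nu \to JD^\nu)$.

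It then remains to identify $R$ and conclude. Because the normalized cover $\tilde D^\nu \to D^\nu$ is ramified (the local monodromies at $p_1, p_2$ are $1$ and $-1$), the kernel of its norm is connected, so $R$ is an abelian variety, namely the Prym $P(\tilde D^\nu, D^\nu)$. Finally, since $\tilde T_d \cong \Zeta/d\Zeta$ is finite, the exact sequence shows that $P$ and $R$ have equal dimension and that the restriction of $P \to R$ to the identity component $P^0 = P(\tilde D, D)$ is surjective with finite kernel, hence an isogeny.

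I expect the main obstacle to be the careful bookkeeping on the toric parts: verifying surjectivity of the norm on $\tilde T$ together with the exact computation of its kernel as $\Zeta/d\Zeta$, and ensuring that the norm genuinely respects the two Jacobian sequences so that the snake lemma applies. Once these are established, connectedness of $R$ and the finiteness of $\tilde T_d$ make the isogeny statement immediate.
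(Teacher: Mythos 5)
Your proposal is correct and follows essentially the same route as the paper: the same two-row diagram of generalized Jacobians with vertical norm maps, the same argument that $(f_D)^*$ injective on $T$ together with $\Nm\circ (f_D)^*=[d]$ forces $\Nm|_{\tilde T}$ to be surjective with kernel $\tilde T_d\cong \Zeta/d\Zeta$, and the same kernel sequence $0\to \tilde T_d\to P\to R\to 0$ with $R$ an abelian variety because the normalized cover is ramified. Your closing remark that one should restrict to the identity component $P^0=P(\tilde D,D)$ before speaking of an isogeny is a slightly more careful phrasing of the paper's final step, but not a different argument.
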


\section{Maximal monodromy}\label{sec:maxmonodromy}
In this section we prove a criterion for the maximality of the algebraic monodromy group for certain families of cyclic coverings. We start by describing the natural $\mathbb{Q}$-variation of Hodge structures associated with a family of Galois coverings.

\begin{say}
Let $G$ be a finite cyclic group. As explained in Section~\ref{sec:preliminaries}, let us consider a topological type of  $G$-action and let $f:\mathcal{C}\rightarrow B$ be the corresponding algebraic family of curves parametrizing all Galois covers carrying a $G$-action topologically equivalent to the one fixed. Thus every cover
$C_b \to C'_b = C_b/G$ has the prescribed invariants: the genera $g=g(C_b)$ and
$g'=g(C'_b)$, together with the chosen ramification and monodromy data.
Let
\[
\V = R^1f_*\QQ
\]
be the local system underlying the natural $\QQ$-variation of Hodge structure (VHS)
associated with this family. Fix a Hodge-generic point $b\in B$ with respect to the variation $\V$.
The $G$-action on the family induces a compatible action on the VHS.
Indeed, denote by $X(G)$ the set of $\C$-irreducible characters of $G$. Then the complexified
local system decomposes as
\[
\V_{\C} = \bigoplus_{\chi\in X(G)} \V_{\chi},
\]
where $\V_{\chi}$ is the $\chi$-isotypical summand. At the fiber $b$, we have
$\V_b = H^1(C_b,\QQ)$, and after extension of scalars to $\C$ we obtain
\[
H^1(C_b,\C) = H^{1,0}(C_b) \oplus H^{0,1}(C_b).
\]
Moreover, the space of holomorphic differentials admits a decomposition
\begin{equation}\label{H0kC}
    H^{1,0}(C_b) = \bigoplus_{\chi\in X(G)} m_{\chi} V_{\chi},
\end{equation}
where $V_{\chi}$ denotes the irreducible $\C$-representation corresponding to $\chi$
and $m_{\chi}$ its multiplicity. In particular,
\[
\V_{\chi,b} = H^1(C_b,\C)_{\chi}
= m_{\chi}V_{\chi}\;\oplus\;\overline{m_{\chi}V_{\chi}}
= m_{\chi}V_{\chi} \oplus m_{\bar\chi}V_{\bar\chi}.
\]

Let $X(G,\R)$ be the set of $\R$-irreducible representations of $G$. For a character
$\chi\in X(G)$, we denote by $(\chi,\overline{\chi})$ the corresponding real
representation. Then one has a decomposition over $\R$ of the form
\[
\V_{\R} = \bigoplus_{(\chi,\overline{\chi}) \in X(G,\R)} \V_{(\chi,\overline{\chi})},
\]
and if $\chi\neq\overline{\chi}$, we have
\[
\V_{(\chi,\overline{\chi})}\otimes_{\R}\C = \V_{\chi} \oplus \V_{\overline{\chi}},
\]
whereas if $\chi=\overline{\chi}$ one obtains
\[
\V_{(\chi,\chi)}\otimes_{\R}\C = \V_{\chi}.
\]

The polarization on the VHS restricts as follows: when $\chi\neq \overline{\chi}$,
it induces a Hermitian form of signature $(m_\chi, m_{\overline{\chi}})$ on
$\V_{(\chi,\overline{\chi})}$; in the case $\chi=\overline{\chi}$, it yields a
symplectic form on $\V_{(\chi,\chi)}$
(see \cite[Corollary~2.21--2.23]{dm}).

Let $\Mon^0 \subset \operatorname{GL}(H^1(C_b,\QQ))$ denote the connected monodromy group, and
$\Hdg \subset \operatorname{GL}(H^1(C_b,\QQ))$ the generic Hodge group of the VHS.
By a theorem of Andr\'e \cite[Theorem~1]{andre}, $\Mon^0$ is a semisimple normal subgroup
of $\Hdg^{der}$. More precisely, we have
\begin{equation}\label{inclusioni mon hodge}
    \Mon^0 \;\subseteq\; \Hdg^{der} \;\subseteq\; \ (\operatorname{Sp}(H^1(C_b,\QQ))^G)^{der},
\end{equation}
where $\Sp(H^1(C_b,\QQ))^G$ is the centralizer of $G$ inside $\Sp(H^1(C_b,\QQ))$.
Upon extension of scalars to $\R$, the latter admits the following explicit
decompositions:
\begin{align}\label{decomposizione SG}
		\Sp(H^1(C_b, \R))^G &\simeq \prod_{(\chi, \bar\chi )\in X(G, \R)} \Sp(H^1(C_b, \R)_\chi)^G \\ &\simeq  \prod_{(\chi, \bar\chi ), \chi\neq\bar\chi} \U(m_\chi, m_{\bar \chi})\times\prod_{(\chi, \bar\chi ), \chi=\bar\chi}\Sp(2m_\chi, \R).\notag
	\end{align}
 Thus, we have 
	\begin{gather}\label{prodmon}
		\Mon^{0}_\R\hookrightarrow (\operatorname{Sp}^G)^{der}=\prod_{\chi\neq \bar{\chi}} \SU(m_\chi, m_{\bar\chi}) \times \prod_{\chi=\bar\chi} \Sp(2m_\chi, \R).
	\end{gather}
    We denote by $\Mon^0(\chi)$ the projection of $\Mon^0_\R$ onto the $\chi$-factor of the decomposition.  If $\chi\neq \bar \chi$, we have $\Mon^0(\chi)\subseteq \SU(m_\chi, m_{\bar \chi})$, whether for $\chi=\bar \chi$, then $\Mon^0(\chi)\subseteq \Sp_{2m_\chi}.$
\end{say}   

\begin{say}\label{degenerazione}
For the remaining part of this section, we focus on the inclusion \eqref{prodmon} and we prove a criterion for the maximality of the monodromy. We will work with families satisfying the following condition:
\begin{defin}
    The family has \textbf{no repeating} factors in the monodromy if for all $\chi_i, \chi_j\in X(G),  i\neq j$, different from the trivial representation and with  $m_{\chi_i}m_{\bar\chi_i}m_{\chi_j}m_{\bar\chi_j} \neq 0$, we have that: \begin{itemize}
     \item 
		$\SU(m_{\chi_i}, m_{\bar\chi_i})\not\cong \SU(m_{\chi_j}, m_{\bar\chi_j}) $ 
    if $\chi_i\neq \bar\chi_i$ and $\chi_j\not\cong\bar\chi_j$
    \item $\Sp(2m_{\chi_i}, \R)\neq \Sp(2m_{\chi_j}, \R)$ if $\chi_i=\bar\chi_i$ and $\chi_j=\bar\chi_j$.
 \end{itemize}
 \end{defin}
 From now on we set \begin{equation}\label{sp pos}
     	(\Sp(H^1(C_b, \R))^G)^{pos}:=	\prod_{\chi\neq \bar{\chi},\, m_\chi m_{\bar\chi}\neq 0} \SU(m_\chi, m_{\bar\chi}) \times \prod_{\chi=\bar\chi, \,m_\chi\neq 0} \Sp(2m_\chi, \R).
 \end{equation}
 Hence, a family has no repeating factors in the monodromy if in $(\Sp(H^1(C_b, \QQ))^G)^{pos}$ all the factors are mutually distinct.
Our main theorem uses an induction argument on the genus $g'$ of the base curve $C'_b$. The base of the induction, i.e. the case with $g'=0$, is the following result:

\begin{teo}\label{tutti diversi}
   	Let $\mathcal{C}\rightarrow \B$ be a family of cyclic $G$-coverings of the line with no repeating factors in the monodromy. Then the monodromy satisfies
    \begin{gather*}
         (\Sp(H^1(C_b, \R))^G)^{pos} \subseteq \Mon^{0}_{\R} .
    \end{gather*}   
\end{teo}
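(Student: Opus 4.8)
The plan is to prove the containment one factor at a time and then assemble the factors using the no-repeating-factors hypothesis. Since $g'=0$, the base $\B$ is a finite cover of the configuration space of the $r$ branch points on $\PP$, and $\Mon^0_\R$ is (the identity component of) the Zariski closure of the image of the braid group $\braid$ acting on $H^1(C_b,\QQ)$. This action commutes with $G$ and hence respects the eigenspace decomposition $\V_\C=\bigoplus_\chi \V_\chi$. First I would describe the generators explicitly: a half-twist $\sigma_i$ interchanging two consecutive branch points acts on each eigenspace $\V_\chi$ as a pseudo-reflection — a complex reflection when $\chi\neq\bar\chi$, and a symplectic transvection when $\chi=\bar\chi$ — with eigenvalue determined by $\chi$ and the local monodromy datum. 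Thus the projection $\Mon^0(\chi)$ is the Zariski closure of a group generated by such reflections inside $\SU(m_\chi,m_{\bar\chi})$ (resp. $\Sp(2m_\chi,\R)$).

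The core step is to show that, for every \emph{positive-dimensional} factor (i.e. $m_\chi m_{\bar\chi}\neq 0$, resp. $m_\chi\neq 0$), this projection is the \emph{whole} factor. I would first verify that the reflection vectors coming from the various half-twists span $\V_\chi$ and are in general position as the branch points vary, so that $\Mon^0(\chi)$ acts irreducibly. Then I would invoke a density criterion for reflection groups: an irreducible Zariski-closed subgroup of $\U(m_\chi,m_{\bar\chi})$ (resp. $\Sp(2m_\chi,\R)$) generated by pseudo-reflections (resp. transvections) is either finite, imprimitive, or the full group. The definite case $m_{\bar\chi}=0$ is exactly the one in which the Hodge structure on the factor is constant and the monodromy image finite, which is consistent with only claiming the positive-dimensional factors. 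For the positive-dimensional factors, the continuous variation of the branch points forces a positive-dimensional, hence infinite, image, which rules out the finite and imprimitive possibilities and yields $\Mon^0(\chi)=\SU(m_\chi,m_{\bar\chi})$ (resp. $=\Sp(2m_\chi,\R)$).

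Finally, I would assemble the factors. By André's theorem, quoted around \eqref{inclusioni mon hodge}, $\Mon^0_\R$ is connected and semisimple, and by the previous step it surjects onto each simple factor of $(\Sp(H^1(C_b,\R))^G)^{pos}$. The no-repeating-factors hypothesis says precisely that these simple factors are pairwise non-isomorphic. A connected semisimple subgroup of a product of pairwise non-isomorphic simple groups that surjects onto each factor must contain the whole product: this is Goursat's lemma at the level of Lie algebras, since a subalgebra of $\bigoplus_i \lieg_i$, with the $\lieg_i$ pairwise non-isomorphic and simple, that projects onto each summand cannot be a proper diagonal, as a diagonal embedding would require an isomorphism between two of the $\lieg_i$. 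This gives $(\Sp(H^1(C_b,\R))^G)^{pos}\subseteq \Mon^0_\R$.

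I expect the main obstacle to be the core step: showing that each individual projection fills out the entire unitary or symplectic factor. Proving irreducibility and, above all, excluding the finite and imprimitive reflection groups for all admissible monodromy data requires a careful study of the reflection vectors and their relative positions (or an appeal to the Deligne–Mostow and McMullen analysis of the monodromy of cyclic covers of $\PP$), the unitary case being more delicate than the symplectic one because of its larger list of exceptional reflection groups. By contrast, the Goursat assembly in the last paragraph is comparatively formal.
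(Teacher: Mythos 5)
Your proposal follows essentially the same two-step structure as the paper's proof: first the per-eigenspace maximality $\Mon^0(\chi)=\SU(m_\chi,m_{\bar\chi})$ (resp.\ $\Sp(2m_\chi,\R)$) on the positive-dimensional factors, which the paper simply quotes from Rohde \cite{rohde} and \cite{ST}, and then the Goursat-type observation that a semisimple group surjecting onto pairwise non-isomorphic simple factors admits no diagonal embedding and hence contains the whole product. Your sketch of the first step via half-twists acting as complex reflections and transvections is precisely the content of the cited results, and you correctly identify that verifying irreducibility and excluding the finite and imprimitive reflection groups is where the real work (carried out in Rohde and in the Deligne--Mostow circle of ideas) lies.
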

\begin{proof}
    This result is proved in \cite[Theorem 1.7]{ST}, for the reader convenience we sketch the proof. The first ingredient, due to Rohde \cite{rohde}, is that for cyclic coverings of the line, whenever the multiplicities $m_\chi$ and $m_{\bar\chi}$ are nonzero, one has $\Mon^0(\chi) = \SU(m_\chi, m_{\bar \chi})$ if $\chi \neq \bar \chi$, and $\Mon^0(\chi) = \Sp_{2m_\chi}$ if $\chi = \bar \chi$. %On the other hand, if $m_{\chi} m_{\bar\chi}=0$ the projection of $\Mon^0$ on the $\chi$-factor of $\operatorname{Sp}^G$ is a connected discrete subgroup of a compact group, hence it is trivial. It follows that $\Mon^0\subset (\Sp(H^1(C_b, \QQ))^G)^{pos}$. 
    
    By definition $\Mon^0_{\R}$ is a semisimple group contained in $(\Sp(H^1(C_b, \R))^G)^{der}$, and by what recalled above, it projects surjectively onto each factor of $(\Sp(H^1(C_b, \R))^G)^{pos}$. Thus, since we are assuming that no repeating factor in $(\Sp(H^1(C_b, \R))^G)^{pos}$ appears, no diagonal embedding is allowed, and thus we get the claimed inclusion. 
    % let us write $\Mon^0=\prod_{i\in I} Q_i$, where $Q_i$ are simple factors. For $i\in I$, since $Q_i\neq \{e\}$ there exists a $\chi$ such that the projection of $Q_i\subset \Mon^0$ on the $ \chi$ factor is non-trivial. Thus, in this case, $m_{\chi} m_{\bar\chi}\neq0$.
    
    % Using that $Q_i$ is normal in $\Mon^0$ and that  $\Mon^0( \chi)= \operatorname{SU}(m_{ \chi}, m_{\bar{\chi}})$ (resp. $\Mon^0( \chi)=\operatorname{Sp}_{2m_\chi} $), we get that the projection of $Q_i$ is normal in $\operatorname{SU}(m_{ \chi}, m_{\bar{\chi}})$ (resp. $\Mon^0( \chi)=\operatorname{Sp}_{2m_\chi} $ ). Thus, by simplicity, we deduce that $Q_i=\operatorname{SU}(m_{ \chi}, m_{\bar{\chi}})$ (resp. $Q_i=\operatorname{Sp}_{2m_\chi} $ ). 
\end{proof}
%\begin{remark}
 %   Note that, if $m_{\chi} m_{\bar\chi}=0$ the projection of $\Mon^0_{\R}$ on the $\chi$-factor of $\operatorname{Sp}^G$ is a connected subgroup of a compact group, hence it is compact. In this case, it is not clear whether the inclusion $\Mon^0(\chi) \subseteq \operatorname{SU}(m_{\chi}, m_{\bar\chi})$ is an equality.
%\end{remark}

The induction step in our proof is based on a degeneration argument. More precisely, we consider admissible $G$-coverings that sit in the boundary of the family $f: \mathcal{C}\rightarrow \B$. The general element in the family is a covering $C \ra C'$ with $g(C) = g$, $g(C') = g'$ and monodromy $\Theta = (\theta_1, \ldots , \theta_s).$ We degenerate this covering to an admissible $G$-cover $f_D: \tilde D \ra D$ of a connected, stable, 1-nodal curve $D$ of arithmetic genus $g'$, whose covering $\tilde D$ has exactly one node on which is fixed by $G$ (and mapped to the node of $D$). The normalization of $f_D$ is the covering  $\tilde D^{\nu} \ra D^{\nu}$ with monodromy $\Theta^{\nu}=(1, -1, \theta_1, \dots, \theta_s)$. This is the setup of section \ref{sec:admissibleandprym}.
\begin{remark}\label{degeneration}
Note that the covering $\tilde D \ra D$ is the generic element of an irreducible component of the intersection $\overline{\mathsf{M}}\cap \Delta_0, $ where $\mathsf{M}$ is the moduli image of the family $f$ in $\mg$, $\overline{\mathsf{M}}$ its closure in $\overline{\M}_g$, and $\Delta_0$ the boundary divisor of irreducible nodal curves. We will refer to the family of the coverings $\tilde D \ra D$ as the $\Delta_0$-\textit{degeneration} of $f$. 
\end{remark}
As in \eqref{H0kC}, let $m_\chi$ be the multiplicity of the $\C$-irreducible representation corresponding to the character $\chi$ of the group $G$ acting on $H^{1,0}(C)$. Similarly, let $m_{\chi}^{\nu}$ be the dimension of the $\chi$-eigenspace for the action of $G$ on $H^{1,0}(\tilde D^{\nu})$. The Lemma below compares the dimensions of the eigenspaces. 
\end{say}

\begin{lemma}\label{lemma-m}
     The following equality holds \begin{gather*}
        m_{\chi_i}=m_{\chi_i}^{\nu}
    \end{gather*}
    for all $\chi\in X(G)$ different from the trivial representation.
    \end{lemma}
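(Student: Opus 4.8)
The plan is to derive the equality directly from the Chevalley–Weil formula \eqref{Chevalley}, applied to each of the two coverings, and to compare the resulting expressions term by term. Since the only inputs to \eqref{Chevalley} are the genus of the base curve and the local monodromy data, the whole statement reduces to a short numerical bookkeeping, and I expect no serious obstacle.

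First I would write out the two instances of the formula. Fix the nontrivial character $\chi$ corresponding to the eigenvalue $\xi^a$, so that $a \not\equiv 0 \bmod d$ and the correction term $\varepsilon$ vanishes. For the covering $C \to C'$, the base genus is $g'$ and the monodromy datum is $\Theta = (\theta_1,\dots,\theta_s)$, so \eqref{Chevalley} gives
\[
m_{\chi} = (g'-1) + \sum_{j=1}^s \frac{[a\theta_j]_d}{d}.
\]
For the normalized covering $\tilde D^{\nu} \to D^{\nu}$, recall that the base genus is $g(D^{\nu}) = g'-1$ and the monodromy datum is $\Theta^{\nu} = (1,-1,\theta_1,\dots,\theta_s)$; hence
\[
m_{\chi}^{\nu} = (g'-2) + \frac{[a]_d}{d} + \frac{[-a]_d}{d} + \sum_{j=1}^s \frac{[a\theta_j]_d}{d}.
\]

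The key step is the elementary identity $[a]_d + [-a]_d = d$, valid exactly because $a \not\equiv 0 \bmod d$: indeed $[a]_d \in \{1,\dots,d-1\}$ and $[-a]_d = d - [a]_d$. Thus the contribution of the two extra branch points of $\Theta^{\nu}$, with local monodromies $1$ and $-1$, equals $\tfrac{[a]_d + [-a]_d}{d} = 1$, and this single unit exactly cancels the drop of the base genus from $g'-1$ to $g'-2$. Substituting back yields $m_{\chi}^{\nu} = (g'-1) + \sum_{j} [a\theta_j]_d/d = m_{\chi}$, which is the claim. I would finish by noting that the hypothesis $\chi \neq \mathrm{triv}$ is essential: for $a \equiv 0 \bmod d$ one has $[a]_d + [-a]_d = 0$ rather than $d$, so the two extra branch points contribute nothing, the genus drop is no longer compensated, and one gets $m_{\chi}^{\nu} = m_{\chi} - 1$ for the trivial character (reflecting $g(D^{\nu}) = g'-1 < g'$).
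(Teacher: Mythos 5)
Your proof is correct and follows exactly the paper's route: both apply the Chevalley--Weil formula to the two coverings and observe that the $+1$ from the extra pair of branch points with monodromies $1$ and $-1$ (via $[a]_d+[-a]_d=d$ for $\chi$ nontrivial) cancels the drop of the base genus from $g'$ to $g'-1$. Your version simply spells out the cancellation, and the remark about the trivial character, more explicitly than the paper does.
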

\begin{proof}
    The result follows directly from the Chevalley-Weil formula \eqref{Chevalley}. Indeed, using that $\Theta^\nu$ is defined with the same $s$-tuple as $\Theta$ plus the pair $(-1,1)$, we have
    \[m^{\nu}_{\chi_i}=-1+m_{\chi_i}+1.\]
\end{proof}
We are now ready to state our main theorem.
\begin{teo}\label{main}
 Let $\mathcal{C}\rightarrow \B$ be a family of cyclic $G$-coverings with no repeating factors in the monodromy. Then the monodromy satisfies 
    \begin{gather}\label{formula1}
         (\Sp(H^1(C_b, \R))^G)^{pos} \subseteq \Mon^{0}_{\R}.
    \end{gather}      
    Moreover, if $g'\geq 1$, the monodromy is maximal, namely
    \begin{gather}\label{formula2}
        \Mon^{0}_{\R} = (\Sp(H^1(C_b, \R))^G)^{der}.
    \end{gather}  
\end{teo}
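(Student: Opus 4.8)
The plan is to prove the inclusion \eqref{formula1} for all $g'\geq 0$ by induction on the genus $g'$ of the base curve, with base case $g'=0$ furnished by Theorem~\ref{tutti diversi}, and then to derive the maximality \eqref{formula2} for $g'\geq 1$ from the group-theoretic identity $(\Sp^G)^{der}=(\Sp^G)^{pos}$, which I would check directly from the Chevalley--Weil formula \eqref{Chevalley}.

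For the inductive step I would pass to the $\Delta_0$-degeneration of $f$ described in \S\ref{degenerazione}: the admissible cover $f_D:\tilde D\to D$, whose normalization $\tilde D^{\nu}\to D^{\nu}$ is itself a family of cyclic $G$-coverings over a base curve of genus $g'-1$, with monodromy $\Theta^{\nu}=(1,-1,\theta_1,\dots,\theta_s)$. By Lemma~\ref{lemma-m} the multiplicities of all nontrivial characters are preserved, $m_\chi^{\nu}=m_\chi$; hence the normalized family still has no repeating factors, and its $(\Sp^G)^{pos}$ agrees factor by factor with that of $f$ on the nontrivial characters. Applying the induction hypothesis to this family over a base of genus $g'-1$, every projection $\Mon^{0,\nu}_\R(\chi)$ onto a positive-dimensional nontrivial factor is already the full $\SU(m_\chi,m_{\bar\chi})$, resp.\ $\Sp(2m_\chi,\R)$.

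The core of the argument, and what I expect to be the main obstacle, is to transport this information back to $\Mon^0_\R$ through the limit mixed Hodge structure of the degeneration. Because the node of $\tilde D$ is $G$-fixed, the vanishing cycle is $G$-invariant: it spans, with its dual, the graded pieces $\mathrm{Gr}^W_0$ and $\mathrm{Gr}^W_2$ of the limit, both lying in the trivial-character summand. Consequently the entire non-invariant part specializes purely into $\mathrm{Gr}^W_1=H^1(\tilde D^{\nu})$. This is exactly the content of Theorem~\ref{iso prym}: $P(\tilde D,D)$ carries no toric part and is isogenous to $P(\tilde D^{\nu},D^{\nu})$, so on the non-invariant part the variation remains pure of weight one (consistently, $\dim\Prym(C_b)=g-g'=\dim P(\tilde D^{\nu},D^{\nu})$, with no drop to a torus), and no mixed phenomenon can shrink the relevant projections. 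It follows that, for each positive-dimensional nontrivial $\chi$, the projection $\Mon^0_\R(\chi)$ contains $\Mon^{0,\nu}_\R(\chi)$ and is therefore the full factor. On the invariant part, which is the variation carried by the family of base curves $C'_b$, the monodromy is Zariski-dense in $\Sp(2g',\R)$ since the Hurwitz base dominates $\mathcal{M}_{g',r}$; moreover the $G$-invariant Picard--Lefschetz transvection around $\Delta_0$ acts nontrivially on this factor while acting trivially on every nontrivial summand, which forbids any diagonal identification of the invariant factor with a nontrivial one. Since $\Mon^0_\R$ is semisimple and surjects onto each factor, the no repeating factors hypothesis (via Goursat's lemma) excludes the remaining diagonals among the nontrivial factors, and the surjections assemble into $(\Sp^G)^{pos}\subseteq\Mon^0_\R$, proving \eqref{formula1}.

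Finally, for $g'\geq 1$ I would establish $(\Sp^G)^{der}=(\Sp^G)^{pos}$ by showing that every nontrivial character with a vanishing eigenspace contributes only a trivial factor. By \eqref{Chevalley}, a nontrivial $\chi_i$ has $m_{\chi_i}=(g'-1)+\sum_j [i\theta_j]_d/d$ and $m_{\bar\chi_i}=(g'-1)+\sum_j [-i\theta_j]_d/d$. For $g'\geq 2$ both are at least $g'-1\geq 1$; for $g'=1$, the vanishing $m_{\bar\chi_i}=0$ forces $[-i\theta_j]_d=0$ for all $j$, whence $[i\theta_j]_d=0$ and $m_{\chi_i}=0$ as well. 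In all cases $m_{\chi_i}=0\Leftrightarrow m_{\bar\chi_i}=0$, so no unbalanced factor $\SU(m_\chi,0)$ with $m_\chi\geq 2$ can occur, and every factor of $(\Sp^G)^{der}$ absent from $(\Sp^G)^{pos}$ is trivial. Thus $(\Sp^G)^{der}=(\Sp^G)^{pos}$, and combining this with \eqref{formula1} and the upper bound $\Mon^0_\R\subseteq(\Sp^G)^{der}$ from \eqref{prodmon} yields the equality \eqref{formula2}.
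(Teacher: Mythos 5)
Your proposal follows the paper's proof essentially step for step: the same induction on $g'$ with base case Theorem~\ref{tutti diversi}, the same $\Delta_0$-degeneration combined with Lemma~\ref{lemma-m} and Theorem~\ref{iso prym} for the inductive step, the same splitting $JC_b\sim JC_b'\times\Prym(C_b,C_b')$ with $\Mon^0(JC'_b)=\Sp_{2g'}$, and the same Chevalley--Weil computation showing $(\Sp^G)^{der}=(\Sp^G)^{pos}$ for $g'\geq 1$. The only substantive difference is that you make explicit (via the limit mixed Hodge structure and the $G$-invariant Picard--Lefschetz transvection) both the passage from the monodromy of the boundary family to that of the original one and the exclusion of a diagonal between the invariant factor and a nontrivial factor --- points the paper leaves implicit --- so this is a faithful, if slightly more detailed, rendering of the same argument.
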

\begin{proof}
    Let $C_b\rightarrow C'_b=C_b/G$ be the generic element in the family. We use induction on the genus $g'$ of the base curve. For $g'=0$, the statement follows by Theorem \ref{tutti diversi}. Let us show that the statement holds for $g'>0$, assuming that it is true for $g'-1$. Since $JC_b\sim JC_b'\times \operatorname{Prym}(C_b, C_b')$, we have 
    \begin{gather*}
       \Mon^0(JC_b)\subseteq \Mon^0(JC'_b)\times \Mon^0(\operatorname{Prym}(C_b, C_b'))\subseteq\operatorname{Sp}_{2g'}\times \prod_{\chi\neq \bar{\chi}} \SU(m_\chi, m_{\bar\chi}) \times \prod_{\chi=\bar\chi} \Sp(2m_\chi, \R), 
    \end{gather*}
    where the first term in the product comes from the trivial representation. As $C_b'$ moves in the whole moduli space $\M_{g'}$, we have $\Mon^0(JC'_b)=\operatorname{Sp}_{2g'}$. Hence, it is enough to focus on $\Mon^0(\operatorname{Prym}(C_b, C_b'))$. Using a degeneration, we can show that \begin{gather}\label{upperbound}\prod_{\chi\neq \bar{\chi},  m_{\chi} m_{\bar {\chi}}\neq 0} \SU(m_\chi, m_{\bar\chi}) \times \prod_{\chi=\bar\chi,  m_\chi\neq 0} \Sp(2m_\chi, \R)\subseteq \Mon^{0}(\operatorname{Prym}(C_b, C_b')). \end{gather}
    Indeed, as in \ref{degenerazione}, we degenerate to the admissible cover $\tilde D \ra D$ whose normalization  $\tilde D^{\nu} \ra D^{\nu}$ is endowed with monodromy $\Theta^{\nu}=(1, -1, \theta_1, \dots, \theta_s)$. By Theorem \ref{iso prym}, we have $\operatorname{Prym}(\tilde D, D)\sim \operatorname{Prym}(\tilde D^{\nu},D^{\nu})$. By Lemma \ref{lemma-m}, the covering $\tilde D^{\nu}\ra D^{\nu}$ has \begin{gather*}
		\SU(m^\nu_{\chi_i}, m^\nu_{\bar\chi_i})\neq \SU(m^\nu_{\chi_j}, m^\nu_{\bar\chi_j})
	\end{gather*}
	for all $\chi_i, \chi_j\in X(G)$ different from the trivial representation, $i\neq j$ with $m^\nu_{\chi_i} m^\nu_{\bar {\chi}_i}\neq 0$. In  other words, $\tilde D^{\nu}\ra D^{\nu}$ also has no repeating factors in the monodromy. Since $g(D^\nu) = g' - 1$, the induction hypothesis implies that $$\prod_{\chi\neq \bar{\chi}, \\ \ m_{\chi} m_{\bar {\chi}}\neq 0} \SU(m_\chi, m_{\bar\chi}) \times \prod_{\chi=\bar\chi, \\ m_\chi\neq 0} \Sp(2m_\chi, \R)\subseteq \Mon^0(\operatorname{Prym}(\tilde D^{\nu}, D^{\nu})).$$  Moreover we have $\Mon^{0}(\operatorname{Prym}(\tilde D^{\nu},D^{\nu}))=\Mon^{0}(\operatorname{Prym}(\tilde D, D))\subseteq \Mon^{0}(\operatorname{Prym}(C_b, C_b'))$, which then implies \eqref{upperbound}, i.e.  \eqref{formula1} holds. 
    
    To complete the proof, we observe that, using Chevalley-Weil formula \ref{Chevalley}, no 0-dimensional eigenspace appears if $g'\geq 2$. Thus,  
    \begin{gather}\label{pos-der}
     (\Sp(H^1(C_b, \R))^G)^{der}= (\Sp(H^1(C_b, \R))^G)^{pos}\subseteq \Mon^0_{\R} \subseteq(\Sp(H^1(C_b, \R))^G)^{der},    
    \end{gather}
    yielding the equality \eqref{formula2} for all $g'\geq 2$. On the other hand, when $g'=1$, again Chevalley-Weil formula implies that, if there exists a representation $\chi_i$ with $m_{\chi_i}=0$, then $m_{{\bar\chi}_i}=0$ as well. This implies that the $\chi_i$-factor in $\operatorname{Sp}^G$ is trivial. Hence, the equality \eqref{pos-der} holds when $g'=1$ too, and thus we conclude.  
\end{proof}

\begin{remark}\label{g'=0}
It can be shown that the maximality of the monodromy implies the Putman-Wieland conjecture. This concerns families of Galois coverings $\mathcal{C}\ra B$ where $B\ra \mathcal{M}_{g',r}$ is dominant, and predicts that $\text{Jac}_{\mathcal{C/B}}$ does not have an isotrivial isogeny factor (i.e. no fixed part). As a consequence, we observe that the bound $g'\geq 1$ in Theorem \ref{main} for the equality in \eqref{formula2} is sharp. In particular, when $g'=0$ the inclusion \eqref{formula1} is the best we can hope for. This comes from the fact that, when $g'=0 $ there do exist examples of families with a fixed part in the Jacobians, hence, $\Mon^0_{\R}\neq (\Sp(H^1(C_b, \QQ))^G)^{der}$. For instance, the Jacobians of the family  \[C_b: y^4=x(x-1)(x-t), \; t\in \PP\smallsetminus\{0, 1, \infty\},\] 
 up to isogeny, have a constant elliptic factor. 
 %Recall that the projection of $\Mon^0_{\R}$ on the $\chi$-factor of $\operatorname{Sp}^G$ satisfies
%\begin{gather}\label{formula3}
%    \Mon^0(\chi) \subseteq \operatorname{SU}(m_{\chi}, m_{\bar\chi}).
%\end{gather}
%If either $m_{\chi}$ or $ m_{\bar\chi}$ is equal to zero, $\Mon^0(\chi)$ is a subgroup of a compact group and, in this case, it is not clear whether the inclusion above is an equality.

%Thus, obviously, if $\operatorname{SU}(m_{\chi}, m_{\bar\chi})$ is non-trivial, the equality in \eqref{formula3} does not hold and $\Mon^{0} \subsetneq(\Sp(H^1(C_b, \QQ))^G)^{der}$. 
%The family with $g'=0$, $G=\mathbb{Z}/5\mathbb{Z}$, $\Theta=([4],[3],[4],[4])$, with $m_{\chi_i}=(0,0,1,1,2)$ is an explicit example where $0$-dimensional eigespaces as above appear, indeed $\Mon^0(\chi_1) \subseteq \operatorname{SU}(0, 2)$.
\end{remark}

\section{The Coleman-Oort conjecture}\label{sec:colemanoort}
In this section, we discuss some consequences of our main result in relation to the Coleman-Oort conjecture. 
\begin{say}

As recalled in the introduction, for low genus, there exist examples of positive dimensional special subvarieties $S\subset \mathcal{A}_g$ that lie generically in the Torelli locus. The main construction proceeds as follows. As in section \ref{sec:preliminaries}, let $G$ be a finite group, and consider the family $\mathcal{C} \to \B$ of all Galois covers with a fixed topological type, set  $\mathsf{M} \subset \mathcal{M}_g$ to be the image of the base $\B$ under the moduli map, and $\Z := \overline{j(\mathsf{M})} \subset \mathcal{A}_g$ as the closure of the image under the Torelli map.

Let $S(G) \subset \mathcal{A}_g$ denote the PEL-type special subvariety associated with the group $G$. This is the special subvariety whose uniformizing symmetric space corresponds to the group $(\Sp_{2g})^G$. By construction, we have the inclusion $\Z\subseteq S(G)$. It is shown in \cite{moonen-special} for the case $g' = 0$ and $G$ cyclic, and extended in \cite{fgp, fpp} to arbitrary $G$ and $g'$, that if the following numerical condition holds for a general $[C] \in \mathsf{M}$:
\begin{equation}
\label{star}
\tag{$\star$}
\dim \Z = \dim H^0(C, 2K_C)^G = \dim \left(S^2 H^0(C, K_C)\right)^G = \dim S(G),
\end{equation}
then $\Z$ is a special subvariety of $\mathcal{A}_g$. Since $\Z$ lies in the Torelli locus, this gives a counterexample to the Coleman-Oort conjecture whenever \eqref{star} is satisfied.
\end{say}

\begin{say}

It is important to note that the condition \eqref{star} is \emph{a priori} only sufficient: if $\dim \Z < \dim S(G)$, one cannot conclude that $\Z$ is not special. Indeed, if we denote by $S_f$ be the smallest special subvariety of $\mathcal{A}_g$ containing $\Z$, we have
\[
\Z \subseteq S_f \subseteq S(G),
\]
and, by definition, $\Z$ is special if and only if $\Z = S_f$. Proving the \emph{necessity} of condition \eqref{star} thus amount to showing that $\Z$ is special if and only if $\Z=S_f = S(G)$.  
\end{say}

\begin{say}
    To complete the picture, we recall that special subvarieties are subvarieties in $\A_g$, which are linear with respect to the locally symmetric metric induced from the Siegel space $\siegg$. More precisely, denote by $\pi: \sieg_g \ra A_g$ the natural projection map. An algebraic subvariety $Z\subset \A_g$ is totally geodesic if $Z=\pi(X)$, for a totally geodesic submanifold $X\subset \sieg_g$. Then,  special subvarieties are totally geodesic ({and not all totally geodesic subvariety of $\mathcal{A}_g$ are special}). Thus, if we denote by $X_f$ the smallest totally geodesic subvariety of $\A_g$ containing our family $\Z$, we have:
\[
\Z \subseteq X_f \subseteq S_f \subseteq S(G).
\]
We recall that $X_f$ is the totally geodesic subvariety
whose uniformizer is the symmetric space associated with the real group $\Mon^0_{\R}$, while $S_f$ is the special subvariety associated with the generic Hodge group $\Hdg_\R$ of the family (see \cite{moonen-linearity-1}). Hence, the inclusions above reflect the inclusions in \eqref{inclusioni mon hodge}.
\end{say}
In this setting, our Theorem \ref{main} implies: 
\begin{prop}\label{necessità}
   Let $\mathcal{C}\rightarrow \B$ be a family of cyclic $G$-coverings with no repeating factors in the monodromy. Then $X_f=S_f=S(G)$. In particular, $\Z$ is totally geodesic if and only if it is special and if and only if \eqref{star} holds.
\end{prop}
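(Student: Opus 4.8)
The plan is to establish the chain of equalities $X_f = S_f = S(G)$, from which both \emph{if and only if} statements follow formally. Since we already have the inclusions $\Z \subseteq X_f \subseteq S_f \subseteq S(G)$, it suffices to prove that the two outer terms agree, i.e. $X_f = S(G)$; the intermediate equality $S_f = S(G)$ is then automatic by sandwiching. Recalling that $X_f$, $S_f$ and $S(G)$ are the totally geodesic subvarieties of $\ag$ uniformized by the symmetric spaces of $\Mon^0_\R$, $\Hdg_\R$ and $(\Sp_{2g})^G$ respectively — the geometric counterpart of the group inclusions \eqref{inclusioni mon hodge} — this reduces the whole proposition to comparing the totally geodesic subvarieties attached to $\Mon^0_\R$ and to $(\Sp_{2g})^G$.

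The crucial observation is that the totally geodesic (Hermitian symmetric) subvariety attached to a reductive group depends only on the \emph{noncompact} factors of its derived group: a central torus or a compact factor acts trivially on the symmetric space and so contributes merely a point. In the decomposition \eqref{decomposizione SG} the factor $\U(m_\chi, m_{\bar\chi})$ carries the type-$A$ domain of dimension $m_\chi m_{\bar\chi}$, which collapses to a point exactly when $m_\chi m_{\bar\chi} = 0$, while $\Sp(2m_\chi, \R)$ carries Siegel space of genus $m_\chi$, a point exactly when $m_\chi = 0$. Comparing with \eqref{sp pos}, the factors present in $(\Sp^G)^{der}$ but absent from $(\Sp^G)^{pos}$ are precisely the compact unitary factors $\SU(m_\chi)$ (those with $m_{\bar\chi}=0$) together with the trivial ones. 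Hence $(\Sp^G)^{pos}$, $(\Sp^G)^{der}$ and $(\Sp_{2g})^G$ all uniformize one and the same totally geodesic subvariety, namely $S(G)$.

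Now I bring in Theorem \ref{main}: under the no-repeating-factors hypothesis one has, for every $g'\geq 0$, the sandwich $(\Sp^G)^{pos} \subseteq \Mon^0_\R \subseteq (\Sp^G)^{der}$, the left inclusion being \eqref{formula1} and the right one \eqref{inclusioni mon hodge}. Passing to symmetric spaces, the subvariety $X_f$ attached to $\Mon^0_\R$ is caught between those attached to $(\Sp^G)^{pos}$ and $(\Sp^G)^{der}$, which coincide by the previous step; therefore $X_f = S(G)$, whence $X_f = S_f = S(G)$. It is worth stressing that this argument does \emph{not} require group-level maximality: when $g'=0$ the inclusion $\Mon^0_\R \subseteq (\Sp^G)^{der}$ is generally strict (cf.\ Remark \ref{g'=0}, where a constant isogeny factor removes a compact $\SU$-factor from $\Mon^0$), yet the associated geometric loci still agree, precisely because the discrepancy is purely compact.

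Finally, the three equivalences follow formally. By definition $\Z$ is totally geodesic iff $\Z = X_f$ and special iff $\Z = S_f$, so the equality $X_f = S_f$ identifies these two properties. As recalled before the statement, the necessity of \eqref{star} is exactly the assertion that $\Z$ is special iff $\Z = S_f = S(G)$; having proved $S_f = S(G)$, and since $\Z \subseteq S(G)$ with both irreducible makes $\Z = S(G)$ equivalent to $\dim \Z = \dim S(G)$ — which is condition \eqref{star} — the known sufficiency of \eqref{star} (\cite{moonen-special, fgp, fpp}) closes the equivalence. I expect the one genuinely delicate point to be the translation dictionary between reductive $\QQ$-groups and totally geodesic subvarieties: verifying cleanly that the central $\U(1)$-tori of the unitary factors and the compact factors $\SU(m_\chi)$ genuinely drop out is exactly the bookkeeping that upgrades the merely group-theoretic sandwich of Theorem \ref{main}, which is \emph{not} an equality of groups when $g'=0$, into an equality of geometric loci.
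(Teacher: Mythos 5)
Your proposal is correct and follows essentially the same route as the paper: both arguments rest on the observation that the factors distinguishing $(\Sp^G)^{pos}$ from $(\Sp^G)^{der}$ are compact (or trivial) and hence contribute zero-dimensional symmetric spaces, so the orbit of $(\Sp^G)^{pos}$ already equals $S(G)$, and the sandwich $(\Sp^G)^{pos}\subseteq\Mon^0_\R\subseteq(\Sp^G)^{der}$ from Theorem \ref{main} forces $X_f=S(G)$. The paper phrases this as a dimension count $\delta(Q)=0$ for compact simple factors $Q$ of $(\Sp^G)^{ad}$, which is the same bookkeeping you carry out; your added remark that only inclusion \eqref{formula1} (valid for all $g'\geq 0$) is needed, not group-level maximality, is consistent with and implicit in the paper's argument.
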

\begin{proof}
     The special subvariety $S(G)$ is the orbit of $\operatorname{Sp}^G$. This is the same as the orbit of $(\operatorname{Sp}^G)^{ad}$. Since $(\operatorname{Sp}^G)^{ad}=((\operatorname{Sp}^G)^{der})^{ad}$, from \eqref{prodmon}, we have $$(\operatorname{Sp}^G)^{ad}=\prod_{\chi\neq \bar{\chi}} \operatorname{PSU}(m_\chi, m_{\bar\chi}) \times \prod_{\chi=\bar\chi} \operatorname{PSp}(2m_\chi, \R). $$
Let $Q$ be one of the simple factors and denote by $\delta(Q)$
the dimension of the corresponding symmetric space. Note that when $Q$
is compact we  have $\delta(Q)=0$.
Therefore, \begin{align*}
    \dim S(G) &=\sum_\chi \delta( \operatorname{PSU}(m_\chi, m_{\bar\chi})) + \sum_\chi \delta(\operatorname{PSp}(2m_\chi, \R))\\
    &=\sum_{m_\chi m_{\bar\chi}\neq 0} \delta( \operatorname{PSU}(m_\chi, m_{\bar\chi}) + \sum_{m_\chi\neq 0} \delta(\operatorname{PSp}(2m_\chi, \R)).
\end{align*}  It follows that the dimension of the symmetric space obtained taking the orbit of $\operatorname{Sp}^G$ is the same as the one coming from $(\operatorname{Sp}^G)^{pos}$. Since $X_f$ is the totally geodesic subvariety associated with $\Mon^0_{\R}$, Theorem \ref{main} yields $X_f=S(G)$. Hence, the statement holds.
\end{proof}
This result allows us to go further in the classification of the counterexamples to the Coleman-Oort conjecture. Indeed in \cite{cgp}, employing a computational approach complemented by theoretical arguments, the authors proved that the positive-dimensional families of Galois covers satisfying
\eqref{star} with $2\leq g \leq 100$ are only those listed in \cite{fgp, fpp}. Our next result is an improvement of the classification in the case of families of cyclic coverings with no repeating factors in the monodromy. Indeed, we can prove the following:
\begin{teo}\label{classificazione}
    The only positive dimensional totally geodesic families of cyclic coverings with no repeating factors in the monodromy are the ones appearing in \cite{fgp, fpp}. In particular, for $g\geq 8$ there are no totally geodesic families of this kind. 
\end{teo}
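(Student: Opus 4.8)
The plan is to reduce everything to the numerical condition \eqref{star} through Proposition \ref{necessità} and then to split according to the genus. By Proposition \ref{necessità}, a positive-dimensional family of cyclic $G$-coverings with no repeating factors is totally geodesic precisely when $\dim \Z = \dim S(G)$, that is, when \eqref{star} holds; so it suffices to classify the families with no repeating factors satisfying \eqref{star}. This gives the two required inclusions: the families of \cite{fgp, fpp} are special, hence totally geodesic, and by Theorem \ref{no repeating moonen} they have no repeating factors, so they all occur in the classification; conversely, one must show that any totally geodesic family with no repeating factors appears in that list.

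In the range $2 \le g \le 100$ this is immediate from \cite{cgp}, where the positive-dimensional families satisfying \eqref{star} are shown to be exactly those of \cite{fgp, fpp}. Since all of these have $g \le 7$, this already proves the statement for $8 \le g \le 100$ and matches the claimed list for $g \le 7$. The remaining content is therefore the range $g > 100$, beyond the computational reach of \cite{cgp}.

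For $g > 100$ I would argue directly that \eqref{star} cannot hold, by comparing the two sides. The left-hand side $\dim \Z = 3g'-3+r$ (which equals $r-3$ when $g'=0$) is the dimension of the base $\mathcal{M}_{g',r}$ of the family, linear in $g'$ and in the number $r$ of branch points. The right-hand side, by the computation in the proof of Proposition \ref{necessità}, is
\[
\dim S(G) = \sum_{\chi \neq \bar\chi,\, m_\chi m_{\bar\chi} \neq 0} m_\chi m_{\bar\chi} \;+\; \sum_{\chi = \bar\chi,\, m_\chi \neq 0} \frac{m_\chi(m_\chi+1)}{2},
\]
whose second sum contains the trivial-character term $\tfrac{g'(g'+1)}{2}$, and where the multiplicities are governed by the Chevalley--Weil formula \eqref{Chevalley}; in particular $m_{\chi_i}+m_{\bar\chi_i} = 2(g'-1)+r_i$, with $r_i \le r$ the number of branch points at which $i\theta_j \not\equiv 0 \bmod d$. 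The no-repeating-factors hypothesis forces the unordered pairs $\{m_\chi, m_{\bar\chi}\}$, and the symplectic multiplicities $m_\chi$, to be pairwise distinct, so that the quadratic contribution to $\dim S(G)$ grows strictly faster than the linear quantity $3g'-3+r$; for $g$ large this yields $\dim \Z < \dim S(G)$, contradicting \eqref{star}.

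The hard part will be making this last inequality effective and uniform, and I would organize it by $g'$. When $g' \ge 2$, the Chevalley--Weil formula ensures that every nontrivial character has $m_\chi \ge 1$, so the trivial-character term $\tfrac{g'(g'+1)}{2}$ already absorbs $3g'-3$ and each complex factor contributes at least $1$; the task then reduces to bounding the nontrivial contribution below by $r$ using distinctness. The delicate cases are $g'=0$ and $g'=1$, where one-sided characters with $m_\chi m_{\bar\chi}=0$ may occur and enlarge $g$ without enlarging $\dim S(G)$; here I expect to need the finer structure of \eqref{Chevalley} together with the distinctness of the factors to still force $\dim S(G) > \dim \Z$ once $g > 100$. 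This numerical estimate --- in effect a verified instance of the Coleman--Oort expectation for this class of families --- is the crux of the argument, the rest being the bookkeeping described above.
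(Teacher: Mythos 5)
Your reduction to condition \eqref{star} via Proposition \ref{necessità} and your use of \cite{cgp} for $2\le g\le 100$ coincide with the paper's opening moves, but the rest of your argument has a genuine gap: the case $g>100$ is exactly the part you leave unproved. You propose a uniform numerical inequality $\dim S(G)>\dim \Z$ based on ``quadratic beats linear,'' yet you yourself flag that the cases $g'=0$ and $g'=1$ are delicate because of one-sided characters with $m_\chi m_{\bar\chi}=0$, and the no-repeating-factors hypothesis gives you no control over how many such compact (hence zero-dimensional) factors occur. For $g'=0$ this estimate is essentially the content of Moonen's classification \cite{moonen-special}, which is a nontrivial theorem, not a routine dimension count; carrying out your sketch there would amount to reproving it. Since that classification is already available and complete for all $g$, the $g'=0$ case needs no new estimate at all --- you should simply cite it.

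The paper's actual route avoids any asymptotic estimate. For $g'\ge 2$ it cites \cite{fgs}, which shows no family satisfies \eqref{star}; for $g'=0$ it invokes Moonen's full classification; and the only genuinely open range, $g'=1$ with $g$ large, is handled by the degeneration argument that is the real new ingredient and is entirely absent from your proposal. Namely, Proposition \ref{genus1} shows that if a genus-$1$ family with no repeating factors satisfies \eqref{star}, then so does the normalization $f^\nu$ of its $\Delta_0$-degeneration, which is a family of covers of $\mathbb{P}^1$ with monodromy $\Theta^\nu=(1,-1,\theta_1,\dots,\theta_s)$; this places $f^\nu$ in Moonen's list, forcing $g(\tilde D^\nu)=g-1\le 7$, i.e.\ $g\le 8$, after which \cite{cgp} closes the remaining small range. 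Without this reduction (which rests on Theorem \ref{iso prym} and Lemma \ref{lemma-m}) or a rigorous substitute for your inequality, the classification for $g'=1$ and $g>100$ is not established.
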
    
In order to prove this result, we first need a preliminary result concerning families of cyclic coverings of genus 1 curves. Our notation will be as in Remark \ref{degeneration}, namely $f: \mathcal{C}\rightarrow \B$ is a family of cyclic coverings with monodromy $\Theta=(\theta_1, \dots, \theta_s)$, $\tilde D \ra D$ is the family of $\Delta_0$-degeneration of $f$ and $\tilde D^{\nu} \ra D^{\nu}$ is the family of their normalisations. This is a family $f^{\nu}: \mathcal{D}^{\nu}\rightarrow \B^{\nu}$ with monodromy $\Theta^{\nu}=(1, -1, \theta_1, \dots, \theta_s)$.

\begin{prop}\label{genus1}
  Let $f: \mathcal{C}\rightarrow \B$ be a family of cyclic coverings of genus-$1$ curves with no repeating factors in the monodromy. Suppose that $f$ satisfies \eqref{star}, then  $f^{\nu}: \mathcal{D}^{\nu}\rightarrow \B^{\nu}$ satisfies it as well. 
\end{prop}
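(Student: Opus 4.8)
The plan is to reduce condition \eqref{star} to the single numerical equality $\dim \mathsf{B} = \dim S(G)$ and then to check that both sides drop by exactly one in passing from $f$ to its $\Delta_0$-degeneration $f^\nu$. First I would note that, for any family of cyclic covers in our setting, two of the equalities in \eqref{star} are automatic. Since the Jacobian map is injective by the Torelli theorem and the moduli map $\mathsf{B}\to \mathsf{M}$ has finite fibres, one has $\dim \Z = \dim \mathsf{M} = \dim \mathsf{B}$. Moreover, the Hermitian symmetric space uniformizing $S(G)$ decomposes as in \eqref{decomposizione SG}, and its complex dimension is $\sum_{\chi\neq\bar\chi} m_\chi m_{\bar\chi} + \sum_{\chi=\bar\chi}\binom{m_\chi+1}{2}$ (each factor $\SU(m_\chi,m_{\bar\chi})$ contributing $m_\chi m_{\bar\chi}$ and each $\Sp(2m_\chi,\R)$ contributing $\binom{m_\chi+1}{2}$); this is exactly $\dim\left(S^2 H^0(C,K_C)\right)^G$, so $\dim\left(S^2 H^0(C,K_C)\right)^G = \dim S(G)$. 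Finally, descending $G$-invariant quadratic differentials to the quotient gives $\dim H^0(C,2K_C)^G = \dim H^0\!\big(C', 2K_{C'} + \sum_i t_i\big)$, because $\lfloor 2(m_i-1)/m_i\rfloor = 1$ for every $m_i\ge 2$; by Riemann--Roch this equals $\dim \mathsf{B}$ once the relevant $h^1$ vanishes. Hence, for both $f$ and $f^\nu$, condition \eqref{star} is equivalent to $\dim \mathsf{B} = \dim S(G)$.

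Next I would compute the two dimensions under degeneration; write $S^\nu(G)$ for the special subvariety attached to $f^\nu$. On the base side, $f$ has genus-$1$ base and $s$ branch points, so $\dim \mathsf{B} = \dim \mathcal{M}_{1,s} = s$, whereas $f^\nu$ has genus-$0$ base and $s+2$ branch points, so $\dim \mathsf{B}^\nu = \dim \mathcal{M}_{0,s+2} = s-1$; thus the base dimension drops by one. On the symmetric-space side, Lemma \ref{lemma-m} shows that the multiplicities $m_\chi$ of all nontrivial characters are unchanged in passing from $C$ to $\tilde D^\nu$, so every $\SU$-factor and every nontrivial $\Sp$-factor contributes the same as before. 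The only change is in the trivial character: by Chevalley--Weil \eqref{Chevalley} its multiplicity is $m_{\mathrm{triv}} = g'$, equal to $1$ for $C$ and $0$ for $\tilde D^\nu$; since it is self-conjugate it sits in the factor $\Sp(2m_{\mathrm{triv}},\R)$, whose symmetric-space dimension $\binom{m_{\mathrm{triv}}+1}{2}$ equals $1$ for $C$ and $0$ for $\tilde D^\nu$. Geometrically this is the $j$-modulus of the elliptic base, which is pinched in the degeneration. Therefore $\dim S^\nu(G) = \dim S(G) - 1$.

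Combining the two computations finishes the argument: assuming \eqref{star} for $f$ yields $\dim \mathsf{B} = \dim S(G)$, hence
\[
\dim \mathsf{B}^\nu = \dim \mathsf{B} - 1 = \dim S(G) - 1 = \dim S^\nu(G),
\]
which is \eqref{star} for $f^\nu$; in fact the computation gives the stronger equivalence that \eqref{star} holds for $f$ if and only if it holds for $f^\nu$.

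I expect the genuinely delicate part to be the bookkeeping in the reduction rather than any deep geometric input: one must verify that $\dim \Z = \dim \mathsf{B}$ and $\dim H^0(2K)^G = \dim \mathsf{B}$ really hold for \emph{both} families, watching for low-genus or few-branch-point configurations where the Riemann--Roch correction terms or the unstable range of $\mathcal{M}_{g',r}$ could intervene, and one must confirm that the \emph{only} multiplicity affected by the degeneration is that of the trivial character and that its contribution changes by exactly one. The essential input making both steps work, namely the invariance of the nontrivial multiplicities, is already supplied by Lemma \ref{lemma-m}, so the proof is in the end a careful dimension count.
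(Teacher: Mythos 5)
Your argument is correct, and it reaches the conclusion by a more elementary route than the paper. The paper also starts from $\dim \B^{\nu}=\dim\B-1$ and reduces everything to showing $\dim S^{\nu}(G)=\dim S(G)-1$, but it proves that equality through monodromy: using \eqref{star} it identifies $\Mon^0(JC_b)$ with $\operatorname{Sp}_2\times\prod\SU(m_\chi,m_{\bar\chi})\times\prod\Sp(2m_\chi,\R)$, then passes to the $\Delta_0$-degeneration, invokes Theorem \ref{iso prym} to identify $\operatorname{Prym}(\tilde D,D)$ with $\operatorname{Prym}(\tilde D^{\nu},D^{\nu})$ up to isogeny, uses $g(D^{\nu})=0$ together with Theorem \ref{tutti diversi} to compute $\Mon^0(J\tilde D^{\nu})$, and finally compares the two orbit spaces. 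You instead compute $\dim S(G)$ and $\dim S^{\nu}(G)$ directly as $\dim\big(S^2H^0(K)\big)^G$ from the Chevalley--Weil multiplicities: Lemma \ref{lemma-m} freezes all nontrivial multiplicities, while the trivial character drops from $m_{\chi_0}=g'=1$ to $0$, removing exactly the one-dimensional $\Sp_2$-orbit (the modulus of the pinched elliptic base). This bypasses Theorems \ref{iso prym} and \ref{tutti diversi} entirely, does not even need the no-repeating-factors hypothesis for this step, and, as you observe, gives the equivalence of \eqref{star} for $f$ and $f^{\nu}$ rather than only one implication; what the paper's longer route buys is consistency with the monodromy formalism used throughout Section \ref{sec:colemanoort}. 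Your preliminary reduction of \eqref{star} to the single equality $\dim\B=\dim S(G)$ is the standard fact from \cite{moonen-special, fgp} and is fine; the only caveat worth recording is the degenerate case $s=0$ (unramified covers of elliptic curves, where $g=1$ and $\M_{0,2}$ is unstable), which is vacuous for the intended application.
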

\begin{proof}
   First observe that $\dim \B^{\nu}=\dim \B-1$. We denote by $S^{\nu}(G)$ the PEL special subvariety associated with the family $f^{\nu}$. To conclude we need to show that $\dim S^{\nu}(G)=\dim S(G)-1$. We write $JC_b\sim JC_b'\times \Prym (C_b, C_b')$. Since \eqref{star} holds for the family $f$, we have
      \begin{align}\label{monod}
       \Mon^0(JC_b)&=\Mon^0(JC'_b)\times \Mon^0(\operatorname{Prym}(C_b, C_b'))\\ &=\operatorname{Sp}_{2}\times \prod_{\chi\neq \bar{\chi}} \SU(m_\chi, m_{\bar\chi}) \times \prod_{\chi=\bar\chi} \Sp(2m_\chi, \R). \notag
    \end{align}
   In particular, 
    \begin{gather*}
       \Mon^0(\operatorname{Prym}(C_b, C_b'))= \prod_{\chi\neq \bar{\chi}} \SU(m_\chi, m_{\bar\chi}) \times \prod_{\chi=\bar\chi} \Sp(2m_\chi, \R). 
    \end{gather*}
  By Lemma \ref{lemma-m}, the covering $D^{\nu}\ra D^{\nu}$ has still no repeating factions in the monodromy. Furthermore, by Theorem \ref{iso prym}, we have $\operatorname{Prym}(\tilde D, D)\sim  \operatorname{Prym}(\tilde D^{\nu},D^{\nu})$. Also, recall that $g(D^{\nu})=0$ and hence $J\tilde D^{\nu}\sim\operatorname{Prym}(\tilde D, D)$. Therefore, by Theorem \ref{tutti diversi}
   \begin{gather*}
       \Mon^0(J\tilde D^{\nu}) =\Mon^0(\operatorname{Prym}(C_b, C_b'))=\prod_{\chi\neq \bar{\chi}} \SU(m_\chi, m_{\bar\chi}) \times \prod_{\chi=\bar\chi} \Sp(2m_\chi, \R). 
   \end{gather*}
 Comparing with \eqref{monod} and taking the orbit spaces, we get that $\dim S^\nu(G)=\dim S(G)-1$.
\end{proof}
    \textit{Proof of Theorem \ref{classificazione}}
        Let $\mathcal{C}\rightarrow \B$ be a family of cyclic $G$-coverings with no repeating factors in the monodromy. By Proposition \ref{necessità}, the family is totally geodesic if and only if it is special and if and only if it satisfies condition \eqref{star}. In \cite{fgs}, it is shown that there are no special families satisfying \eqref{star} for $g'\geq 2$. On the other hand, the classification of such families with $g'=0$ follows from the main theorem in \cite{moonen-special}. All these examples are in genus $g\leq 7$. Hence, let us focus on the case $g'=1$. In \cite{fpp}, 4 examples are provided. Again, they all have $g\leq 7$. Therefore, we only have to prove that no other further examples exist. 
        Suppose that such a family $f$ exists.
        By Proposition \ref{genus1}, if follows that the normalization $f^{\nu}$ still satisfies \eqref{star} and hence it is one of the families listed in \cite{moonen-special}.
        Since $g(\tilde D^{\nu})=g(C)-1$, we obtain $g\leq 8$. Therefore, the statement follows from the main theorem in \cite{cgp}, where the authors prove that the only positive-dimensional families of Galois covers satisfying \eqref{star} with $2\leq g \leq 100$ are only those listed in \cite{fgp, fpp}.
        \qed\\

        \begin{remark}
            Notice that, thanks to Proposition \ref{necessità}, we may work with totally geodesic subvarieties rather than only with special ones. In other words, our results contribute to the stronger form of the Coleman–Oort conjecture, which predicts the non-existence of positive-dimensional totally geodesic subvarieties of $\ag$ that are generically contained in the Torelli locus.
        \end{remark}
We now show that, dealing with special families, our assumption on the monodromy is not too strong. Indeed, we have the following:
\begin{teo}\label{no repeating moonen}
    All known examples of special families of cyclic coverings have no repeating factors in the monodromy.
\end{teo}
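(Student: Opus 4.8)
The plan is to prove the statement by a direct verification against the explicit, finite classification of the known special families. Recall that these families are: for $g'=0$, Moonen's list of cyclic covers of the line (\cite{moonen-special}); for $g'=1$, the four families exhibited in \cite{fpp}; and, by \cite{fgs}, there are no special families with $g'\geq 2$. Thus only finitely many families need be inspected, and for each of them the multiplicities $m_\chi$ of the eigenspaces are completely determined by the monodromy datum $\Theta$ through the Chevalley--Weil formula \eqref{Chevalley}. The whole argument therefore reduces to reading off these multiplicities and checking the defining inequalities of the \emph{no repeating factors} condition.

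Before computing, I would first reduce that condition to a single, easily checkable statement. Since $G$ is cyclic of order $d$, a nontrivial character $\chi_k$ is self-conjugate precisely when $2k\equiv 0 \pmod d$, which forces $k=d/2$ and can occur for at most one value of $k$ (and only when $d$ is even). Consequently there is at most one nontrivial self-conjugate character, hence at most one symplectic factor $\Sp(2m_\chi,\R)$ ever appears in $(\operatorname{Sp}^G)^{pos}$, and the second bullet of the definition holds automatically. Using that $\SU(a,b)\cong\SU(a',b')$ if and only if $\{a,b\}=\{a',b'\}$ (the total dimension and, when the sums agree, the real rank $\min$ distinguish the real forms, and the compact groups $\SU(m,0)$ are excluded by $m_\chi m_{\bar\chi}\neq 0$), the task is reduced to verifying that, for each family, the unordered pairs $\{m_{\chi},m_{\bar\chi}\}$ are pairwise distinct as $(\chi,\bar\chi)$ ranges over the non-self-conjugate characters with $m_\chi m_{\bar\chi}\neq 0$.

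With this reduction in hand I would carry out the verification family by family. For the bulk of Moonen's list, namely the one-dimensional families (those with $\dim\B=1$, coming from $r=4$ branch points), the symmetric space attached to $(\operatorname{Sp}^G)^{pos}$ is one-dimensional, so it has a single non-compact simple factor, a copy of $\SU(1,1)$ or of $\Sp(2,\R)$, and the condition is trivially satisfied. For the remaining, higher-dimensional families I would tabulate the multiplicities $m_{\chi_k}=(g'-1)+\sum_j [k\theta_j]_d/d$, list the resulting pairs $\{m_{\chi_k},m_{\bar\chi_k}\}$, and observe directly that no two coincide; in many instances even the totals $m_{\chi_k}+m_{\bar\chi_k}=2(g'-1)+\#\{j:\,k\theta_j\not\equiv 0\bmod d\}$ already differ, which settles distinctness at once. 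The four $g'=1$ families of \cite{fpp} are treated by the same tabulation.

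The only place where genuine work is required is this last step for the higher-dimensional families, where several pairs $(\chi,\bar\chi)$ satisfy $m_\chi m_{\bar\chi}\neq 0$ simultaneously. Here one cannot appeal to any formal principle: a coincidence $\{m_{\chi_i},m_{\bar\chi_i}\}=\{m_{\chi_j},m_{\bar\chi_j}\}$ is a priori compatible with speciality, since it would merely force the monodromy to embed diagonally into the repeated $\SU$-factor rather than surject onto the product, so the statement is a true feature of the specific arithmetic data of the covers and not a tautology. The Chevalley--Weil computation for these cases must therefore be carried out explicitly, and the theorem follows once all the multiplicity pairs have been shown to be distinct.
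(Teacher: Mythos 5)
Your overall strategy --- reduce the \emph{no repeating factors} condition to distinctness of the unordered pairs $\{m_\chi,m_{\bar\chi}\}$ and then inspect the finite list of known families --- is sound, and your preliminary reductions (for cyclic $G$ there is at most one self-conjugate nontrivial character, so the symplectic bullet of the definition is vacuous; $\SU(a,b)\cong\SU(a',b')$ iff $\{a,b\}=\{a',b'\}$; one-dimensional families are automatic because a single non-compact factor appears) agree with what the paper uses. The divergence, and the problem, is in the higher-dimensional families: you propose to tabulate the Chevalley--Weil multiplicities family by family and verify distinctness by hand, but you never actually perform this tabulation --- you explicitly defer it as ``the only place where genuine work is required.'' Since that is precisely the case where the statement has content (several pairs $(\chi,\bar\chi)$ with $m_\chi m_{\bar\chi}\neq 0$ coexist and a coincidence is a priori possible), the proof as written is incomplete. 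The paper avoids the computation entirely by a structural argument: it first disposes of double and triple covers (where $\mathbb{Z}/2$ and $\mathbb{Z}/3$ admit only one nontrivial character pair, so the condition is automatic), and for the remaining higher-dimensional families it invokes \cite[Theorem~1.2]{ct}, which shows that their uniformizing symmetric space is \emph{irreducible}; hence $(\operatorname{Sp}^{G})^{pos}$ has a single factor and no repetition can occur. Your route is more elementary and self-contained --- it needs only the Chevalley--Weil formula \eqref{Chevalley} rather than the irreducibility theorem --- and it would succeed if executed, but to count as a proof you must either fill in the explicit multiplicity tables for each higher-dimensional family in \cite{fgp, fpp} or replace that step by a citation of the kind the paper uses.
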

\begin{proof}
    The examples of special families of cyclic coverings can be found in \cite{fgp, fpp}. The proof of the statement is based in the following considerations. First, if the family is one-dimensional, then the uniformizing symmetric space of $S(G)$ is $\sieg_1$, so this is automatic. Families with double and triple coverings always have no repeating factors in the monodromy; this is discussed in section \ref{sec:doubleandtriple}. Finally, for the remaining higher dimensional families we refer the reader to \cite[Theorem 1.2]{ct}, where it is shown that their uniformizing symmetric space is an irreducible symmetric space, and this implies that only one factor appears in $(\operatorname{Sp}^{G})^{pos}$ (hence, in particular, they have no repeating factors in the monodromy).
\end{proof}
\begin{say}
    It is interesting to compare our results with Theorem 1.3 in \cite{landesmaneco}. Indeed, in this paper, under some numerical assumptions, the authors show that the identity component of the Zariski closure of the image of the monodromy map associated with a family of $G$-covers is the is the commutator subgroup of $(\Sp_{2g})^G$. More precisely, in the case of a family of abelian covers $C\ra C'$ the result provides 
    \begin{equation*} \label{mon-der}
        \Mon^0_{\R}= (\Sp(H^1(C, \R))^G)^{der}
    \end{equation*}
    as soon as $g(C')\geq 4$. Hence, for $g'\geq 4$ our result provides only an alternative technique to get the maximality in our hypothesis. On the other hand, our result also covers the genera $1\leq g'\leq 4$ for cyclic coverings with no repeating factors in the monodromy. We also note that \cite[Theorem 1.3]{landesmaneco} says that, letting $d$ be the maximal dimension of an irreducible complex representation of any finite group $G$, then
    $$X_f=S_f=S(G)$$
    for every family of unramified $G$-covers with $g'\geq 2d+2$ or ramified $G$-covers with $g'>max\{2d + 1,d^2\}$.
    Thus, arguing as in the proof of Theorem \ref{classificazione} and, using that by \cite[Theorem 1.1]{fgs} if $g'\geq 2$ there are no families satisfying condition \eqref{star}, we get: 
\end{say}     

\begin{prop}
    Let $G$ be any finite group and let $d$ be the maximal dimension of an irreducible complex $G$-representation. Then, every family of unramified $G$-covers with $g'\geq 2d+2$ or ramified $G$-covers with $g'>max\{2d + 1,d^2\}$ is not totally geodesic. 
\end{prop}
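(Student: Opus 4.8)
The plan is to combine the maximality result from \cite{landesmaneco} with the non-existence result for families satisfying \eqref{star} in genus $g'\geq 2$, exactly as in the proof of Theorem \ref{classificazione} but now for an arbitrary finite group $G$ and without the no-repeating-factors hypothesis. First I would invoke \cite[Theorem 1.3]{landesmaneco}: under the stated numerical bounds on $g'$ (namely $g'\geq 2d+2$ in the unramified case, or $g'>\max\{2d+1,d^2\}$ in the ramified case), one has the chain of equalities $X_f=S_f=S(G)$. In particular, the smallest totally geodesic subvariety $X_f$ containing the moduli image $\Z$ coincides with the PEL special subvariety $S(G)$.

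Next I would recall the dimension dictionary from section \ref{sec:colemanoort}: the family $\Z$ is totally geodesic if and only if $\Z=X_f$, and since $X_f=S(G)$ under our hypotheses, being totally geodesic forces $\Z=S(G)=S_f$, i.e. $\Z$ is special. But a special family generically contained in the Torelli locus is precisely one satisfying the numerical condition \eqref{star}, so $\dim\Z=\dim S(G)$ with equality throughout \eqref{star}. At this point I would apply \cite[Theorem 1.1]{fgs}, which asserts that for $g'\geq 2$ there are \emph{no} positive-dimensional families satisfying \eqref{star}. Since the bounds $g'\geq 2d+2$ and $g'>\max\{2d+1,d^2\}$ both force $g'\geq 2$ (indeed $d\geq 1$ always, so $2d+2\geq 4$ and $\max\{2d+1,d^2\}\geq 3$), the hypothesis $g'\geq 2$ of \cite{fgs} is automatically met, giving a contradiction with the existence of a positive-dimensional totally geodesic $\Z$.

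Therefore no such totally geodesic family can exist under the stated genus bounds, which is exactly the assertion of the proposition. The argument is essentially a two-line deduction once the two black-box inputs are in place, so I would keep the write-up short, emphasizing only the logical reduction: totally geodesic $\Rightarrow$ $X_f=\Z$, combined with $X_f=S(G)$ from \cite{landesmaneco}, forces \eqref{star}, which is impossible for $g'\geq 2$ by \cite{fgs}.

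I expect the only subtle point—and hence the main thing to check carefully—is that the equality $X_f=S(G)$ supplied by \cite{landesmaneco} genuinely implies $\Z$ satisfies \eqref{star}. One must verify that totally geodesic means $\Z=X_f$ (not merely $\Z\subseteq X_f$) and that $\Z=S(G)$ is equivalent to the chain of dimension equalities in \eqref{star}; this is where the interplay between the three quantities $\dim\Z$, $\dim S(G)$, and the dimension of $\bigl(S^2H^0(C,K_C)\bigr)^G$ must be invoked, rather than taken for granted. The rest is a routine numerical check that the hypotheses on $g'$ subsume $g'\geq 2$.
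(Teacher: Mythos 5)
Your proposal is correct and follows essentially the same route as the paper: the paper likewise quotes \cite[Theorem 1.3]{landesmaneco} to get $X_f=S_f=S(G)$ under the stated genus bounds, then argues as in Theorem \ref{classificazione} that a totally geodesic family would have to satisfy \eqref{star}, contradicting \cite[Theorem 1.1]{fgs} since the bounds force $g'\geq 2$. The subtlety you flag is handled in the paper by the discussion preceding Proposition \ref{necessità}: totally geodesic means $\Z=X_f$, and $\Z=S(G)$ amounts to $\dim\Z=\dim S(G)$, which is the content of \eqref{star}.
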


%\begin{remark} 
%We now want to the difference between the maximality of the monodromy \eqref{maximal-mon} which is the object of this note, and the condition \eqref{mon-der} treated in %\cite{landesmaneco}. 
%We observe that Ossservare che LandLitt costruiscono controesempi alla massimalità con famiglie che hanno qualcosa che sta fermo. Ossia in in $Sp^G$ c'un SU(a, 0) e il mon corrispondente è un punto (Rohde). Questoè diverso da quello che facciamo noi dove prendiamo gli ad. Quindi c'è speranza che star valga sempre e non che i loro controesempi siano controesmi anche a noi. 
% \end{remark}

\section{Application to double and triple coverings}\label{sec:doubleandtriple}
In this Section we apply Proposition \ref{necessità} and Theorem \ref{classificazione} to double and triple coverings of curves of a fixed genus. We start by fixing some notation. 
Let $\mathcal{B}_{g,h}$ be the image in $\M_g$ of the family of all double covers $f: C\ra C'$ with $g(C)=g$ and $g(C')=h$. Notice that, by Riemann-Hurwitz, the number of ramification points is $r=2g-4h+2$. It follows that $\operatorname{dim}(\mathcal{B}_{g,h})= 2g-h-1$. 

\begin{cor}\label{double}
The smallest totally geodesic subvariety  containing $\mathcal{B}_{g,h}$ is the PEL special variety $S(\mathcal{B}_{g,h})\cong\A_h\times \A_{g-h}.$ In particular, $\overline{j(\Bb_{g,h})}$ is not totally geodesic in $\A_g$ whenever $(g,h)\neq (1,0), (2,0), (2,1), (3,1)$. 
\end{cor}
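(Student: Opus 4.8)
The plan is to treat this as a direct application of Proposition \ref{necessità} combined with a dimension count, so that the entire argument reduces to elementary algebra once the symplectic decomposition is made explicit. First I would set $G = \Zeta/2\Zeta$ and record its two $\C$-irreducible characters: the trivial character $\chi_0$ and the sign character $\chi_1$, both of which are real. The no-repeating-factors hypothesis is then automatic, since it compares only pairs of \emph{distinct nontrivial} characters and $G$ has a single nontrivial character $\chi_1$. Next I would make the decomposition \eqref{decomposizione SG} explicit: the $G$-action splits $H^1(C_b,\R)$ into its invariant part $f^*H^1(C_b',\R)$, of symplectic dimension $2h$, and its anti-invariant (Prym) part, of symplectic dimension $2(g-h)$; equivalently, the Chevalley--Weil formula \eqref{Chevalley} gives $m_{\chi_0}=h$ and $m_{\chi_1}=g-h$. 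As both characters are self-conjugate, \eqref{decomposizione SG} reads
\[
\Sp(H^1(C_b,\R))^G \simeq \Sp(2h,\R)\times \Sp(2(g-h),\R),
\]
so the associated PEL special subvariety is $S(G)\cong \A_h\times \A_{g-h}$, reflecting the isogeny $JC_b\sim JC_b'\times \Prym(C_b,C_b')$.

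The first assertion then follows at once from Proposition \ref{necessità}: since the family has no repeating factors, $X_f = S_f = S(G) \cong \A_h\times \A_{g-h}$ is the smallest totally geodesic subvariety containing $\overline{j(\Bb_{g,h})}$. By that same proposition, $\overline{j(\Bb_{g,h})}$ is totally geodesic if and only if \eqref{star} holds, i.e. if and only if $\dim \Bb_{g,h} = \dim S(G)$. Here I would flag the boundary case $h=0$ (so $g'=0$): although the monodromy equality \eqref{formula2} requires $g'\geq 1$, the proof of Proposition \ref{necessità} only uses that the orbit of $(\Sp^G)^{pos}$ has the same dimension as the orbit of $\Sp^G$ together with the inclusion \eqref{formula1}, both of which hold for all $g'$; hence $X_f = S(G)$ remains valid for $h=0$.

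It then remains to solve the dimension equality. Using $\dim \Bb_{g,h} = 2g-h-1$ and $\dim S(G) = \binom{h+1}{2}+\binom{g-h+1}{2}$, and writing $k=g-h$ for the Prym dimension, the equation $2g-h-1 = \binom{h+1}{2}+\binom{k+1}{2}$ simplifies to $h^2-h+k^2-3k+2=0$, equivalently
\[
(2h-1)^2 + (2k-3)^2 = 2.
\]
Since $2$ is a sum of two odd squares only as $1+1$, the integer solutions are $2h-1=\pm 1$ and $2k-3=\pm 1$, giving $(h,k)\in\{(0,1),(0,2),(1,1),(1,2)\}$, that is $(g,h)\in\{(1,0),(2,0),(2,1),(3,1)\}$. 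For every other pair $(g,h)$ the two dimensions differ, so $\overline{j(\Bb_{g,h})}$ is not totally geodesic.

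The argument is essentially formal, as the only substantive inputs (Proposition \ref{necessità}, resting on Theorem \ref{main}) are already established. The points requiring care are the bookkeeping of the symplectic factors and the no-repeating-factors verification, together with the $h=0$ case discussed above; the genuine ``main obstacle'' is the final Diophantine step, which nonetheless resolves cleanly into exactly the four stated exceptions.
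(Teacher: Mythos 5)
Your proposal is correct and follows essentially the same route as the paper: automatic verification of the no-repeating-factors condition for $G=\Zeta/2\Zeta$, application of Proposition \ref{necessità} together with Chevalley--Weil to identify $S(G)\cong\A_h\times\A_{g-h}$, and then the dimension comparison $2g-h-1=\binom{h+1}{2}+\binom{g-h+1}{2}$. Your explicit Diophantine reduction to $(2h-1)^2+(2(g-h)-3)^2=2$ and your remark on the $h=0$ case simply make precise steps the paper leaves implicit, and both check out.
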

\begin{proof}
    We start by observing that double coverings families obviously have no repeating factors in the monodromy, since the group $G=\mathbb{Z}/2\mathbb{Z}$ has only one non-trivial representation. Hence Proposition \ref{necessità} implies that the smallest special subvariety containg the moduli image of the family is the PEL $S(G)$. To compute it, we use the Chevalley-Weil formula and obtain that $$\Mon^0(\mathcal{B}_{g,h})=\Hdg_\R(\mathcal{B}_{g,h})=\operatorname{Hg}_\R(S(\mathcal{B}_{g,h})) = \operatorname{Sp}_{2h}\times \operatorname{Sp}_{2(g-h)}.$$ Thus,  $\Bb_{g,h}$ is totally geodesic if and only if $\dim\Bb_{g,h} =\dim \A_h\times \A_{g-h} $; namely if and only if 
    $(g,h)= (1,0), (2,0), (2,1), (3,1)$. 
\end{proof}

\vspace{0.5cm}
Now let $\mathcal{T}_{g,h}(r,m)$ be the image in $\mg$ of the family of all Galois  triple coverings $f:C\ra C'$ with $g(C)=g,\ g(C')=h $, $r=2m+n$ ramification points and fixed monodromy datum, which we write as: 
\begin{gather}\label{monodromia-canc}
  \Theta=(x_1, \ldots, x_{2m}, y_1, \ldots, y_n),  
\end{gather}
where $x_i+x_{i+1}=0$ mod $3$ and $y_i+y_j\neq 0$ mod $3$ for all $i$ and $j$. A couple $(x_i, x_{i+1})$ with $x_i+x_{i+1}=0$ mod $3$ is called a canceling pair. With the notation in \eqref{monodromia-canc}, $m$ is the number of the canceling pairs in $\Theta$, and $\sum_i y_i=0 \mod 3$.

We observe that (since it is a triple covering), the choice of $r$ and $m$ uniquely determines $\Theta$. In particular, we will assume that $(y_1, \ldots, y_n)= ([1]_3, \ldots, [1]_3)$, and thus $r=2m \mod 3$. Recall that, by Riemann-Hurwitz, we have $r=g-3h+2$.  We first consider the unramified case, i.e. we assume $r=0, h\geq 2$. We will denote by $\A(p, q)$ a totally geodesic subvariety of $\ag$ whose uniformizing symmetric is associated with the group $SU(p,q)$. We have the following: 
\begin{cor}\label{teo trig non ram}

The smallest totally geodesic subvariety containing $\mathcal{T}_{3h-2,h}$  is the PEL special variety $S(\mathcal{T}_{3h-2,h})\cong\A_h\times \A(h-1, h-1)$. In particular, $\overline{j(\mathcal{T}_{3h-2,h})}$ is not totally geodesic in $\A_g$.
\end{cor}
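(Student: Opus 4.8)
The plan is to follow the strategy of Corollary \ref{double}, with the group $\mathbb{Z}/2\mathbb{Z}$ replaced by $\mathbb{Z}/3\mathbb{Z}$. First I would note that a triple covering automatically has no repeating factors in the monodromy: the group $G=\mathbb{Z}/3\mathbb{Z}$ has only the two nontrivial characters $\chi_1$ and $\bar\chi_1=\chi_2$, which form a single conjugate pair, so there is exactly one $\SU$-factor coming from nontrivial characters in $(\operatorname{Sp}^G)^{pos}$ and the no-repeating condition holds vacuously. Proposition \ref{necessità} then applies: the smallest totally geodesic subvariety containing the moduli image is precisely the PEL special subvariety $S(G)$, and the family is totally geodesic if and only if it is special, if and only if condition \eqref{star} holds.

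The next step is to identify $S(G)$ explicitly by computing the multiplicities through the Chevalley-Weil formula \eqref{Chevalley}. In the unramified case $r=0$ (equivalently $g=3h-2$, by the Riemann-Hurwitz relation $r=g-3h+2$), the ramification sum vanishes, giving $m_{\chi_1}=m_{\bar\chi_1}=h-1$ for the nontrivial characters and $m_{\chi_0}=h$ for the trivial one. The trivial factor yields $\Sp_{2h}$ (the Jacobian of the base curve, which moves in all of $\M_h$), while the pair $(\chi_1,\bar\chi_1)$ contributes $\SU(h-1,h-1)$. Hence
\begin{gather*}
\Mon^0(\mathcal{T}_{3h-2,h})=\Hdg_\R(\mathcal{T}_{3h-2,h})=\operatorname{Hg}_\R(S(\mathcal{T}_{3h-2,h}))=\Sp_{2h}\times\SU(h-1,h-1),
\end{gather*}
which identifies $S(\mathcal{T}_{3h-2,h})$ with $\A_h\times\A(h-1,h-1)$, as claimed.

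Finally I would reduce the statement to a dimension count. By Proposition \ref{necessità}, the family is totally geodesic exactly when $\dim\mathcal{T}_{3h-2,h}=\dim S(\mathcal{T}_{3h-2,h})$. Since the cover is unramified, the base curve varies in $\M_h$ with only finitely many covers over each point, so $\dim\mathcal{T}_{3h-2,h}=\dim\M_h=3h-3$. On the other hand, using $\dim\A_h=\tfrac{h(h+1)}{2}$ and that the symmetric space of $SU(h-1,h-1)$ has dimension $(h-1)^2$, one gets $\dim S(\mathcal{T}_{3h-2,h})=\tfrac{h(h+1)}{2}+(h-1)^2$. The remaining task is to verify the strict inequality $3h-3<\tfrac{h(h+1)}{2}+(h-1)^2$ for all $h\geq 2$, which reduces to $3h^2-9h+8>0$; since the discriminant $81-96$ is negative this holds for every integer $h$, so equality never occurs. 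I do not expect a genuine obstacle here: the substance lies entirely in the correct bookkeeping of the multiplicities and of the dimension of the moduli image, after which the conclusion that $\mathcal{T}_{3h-2,h}$ is never totally geodesic for $h\geq 2$ is immediate.
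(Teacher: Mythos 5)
Your proposal is correct and follows essentially the same route as the paper: vacuous no-repeating condition for $\mathbb{Z}/3\mathbb{Z}$, Proposition \ref{necessità} to reduce to the PEL subvariety, Chevalley--Weil to identify $S(\mathcal{T}_{3h-2,h})\cong\A_h\times\A(h-1,h-1)$, and a dimension comparison. The only difference is that you make explicit the inequality $3h-3<\tfrac{h(h+1)}{2}+(h-1)^2$, which the paper dismisses as ``by dimension reasons''; your verification of it is accurate.
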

\begin{proof}
    As before, we observe that triple coverings families have no repeating factors in the monodromy,  as the group $G=\mathbb{Z}/3\mathbb{Z}$, has only one pair of non-trivial representations $(\chi_{[1]}, \chi_{[2]})$, which are conjugate to each other.  Thus, by Proposition \eqref{necessità}, the smallest special subvariety containing $\mathcal{T}_{3h-2,h}$ is $S(G)$. We use the Chevalley-Weil formula and obtain that $$\Mon^0(\mathcal{T}_{3h-2,h})=\Hdg_\R(\mathcal{T}_{3h-2,h})=\operatorname{Hg}_\R(S(\mathcal{T}_{3h-2,h})) = \operatorname{Sp}_{2h}\times \,SU(h-1,h-1) ,$$ which shows that $S(\mathcal{T}_{3h-2,h})$ is as claimed.
By dimension reasons, we get that $\mathcal{T}_{3h-2,h}\subsetneq S(\mathcal{T}_{3h-2,h}) $, and thus is not totally geodesic.
\end{proof}
%\begin{remark}
%    As already seen in the above, let $\Prym(f)$ be the Prym variety associated with the covering $f$ and let $\mathcal{P}: \mathcal{T}_{3h-2,h}\ra \A_{2(h-1)}$ be the corresponding Prym map. Let us observe that $\operatorname{Im}(\mathcal{P})$ sits inside $\A(h-1, h-1)\subset \A_{2(h-1)}$. Notice that Theorem \ref{teo trig non ram} tells us that $\A(h-1, h-1)$ is the smallest special subvariety containing $\operatorname{Im}(\mathcal{P})( \mathcal{T}_{3h-2,h})$. Notice that, when $h=2$ and $h=4$ the inclusion is actually an equality. Indeed, when $h=2$, $\mathcal{T}_{4,2}$ is the locus of triple \'etale coverings $C\ra C'$ of genus 2 curves $C'$. As it is well-known the hyperelliptic involution of $C'$ lifts to an involution $\iota$ on $C$ and the Prym variety $\Prym(C, C')$ is isomorphic to $E^2$, where $E$ is the elliptic curve obtained as $C/\iota$. \rosso{cambiare }As in the proof of Theorem \ref{teo trig non ram}, by \cite{biswasx3}, we have that $E$ varies in family. It follows that $\mathcal{P}(\mathcal{T}_{4,2})$ coincides with the special subvariety $\A(1, 1)(=\A_1)$, as claimed. The case $h=4$ is treated in \cite[Theorem 1.1]{lange ortega}. Finally,  observe that for $h=2$ and coverings of degree equal to $5,7$ the analogous result is discussed in \cite{ire}.
% \end{remark}
Finally, we discuss triple Galois ramified coverings.  We have the following:
\begin{cor}\label{triple-with-ram}
The smallest totally geodesic subvariety containing $\mathcal{T}_{g,h}(r,m)$ is the PEL special variety $S(\mathcal{T}_{g,h}(r,m))\cong\A_h\times \A(d_1, d_2)$, where $d_1= h-1+ \frac{2}{3}r-\frac{1}{3}m$ and $d_2= h-1+ \frac{1}{3}r+\frac{1}{3}m$. 
Hence, $\overline{j(\mathcal{T}_{g,h}(r,m))}$ is totally geodesic in $\A_g$ if and only if $$(g,h,r,m)=(2,0,4,2), (3, 0, 5, 1), (4,0,6,0), (3,1, 2, 1), (4,1,3,0).$$
\end{cor}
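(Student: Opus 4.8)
The plan is to follow the template of Corollaries \ref{double} and \ref{teo trig non ram}, reducing everything to a single dimension equality and then solving it. Since $G=\Zeta/3\Zeta$ has a unique pair of non-trivial conjugate characters $(\chi_{[1]},\chi_{[2]})$, the family has no repeating factors automatically, so Proposition \ref{necessità} applies: the smallest totally geodesic subvariety containing $\mathcal{T}_{g,h}(r,m)$ is the PEL subvariety $S(G)$, and $\overline{j(\mathcal{T}_{g,h}(r,m))}$ is totally geodesic if and only if $\dim \mathcal{T}_{g,h}(r,m)=\dim S(G)$.

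First I would read off the two eigenspace multiplicities from the Chevalley--Weil formula \eqref{Chevalley}. A canceling pair $(x_i,x_{i+1})$ (necessarily $\{x_i,x_{i+1}\}=\{[1]_3,[2]_3\}$) contributes $1$ to each of $m_{\chi_{[1]}},m_{\chi_{[2]}}$, whereas each point with $y_i=[1]_3$ contributes $\tfrac13$ to $m_{\chi_{[1]}}$ and $\tfrac23$ to $m_{\chi_{[2]}}$. Writing $n=r-2m$ for the number of non-canceling points, this gives $m_{\chi_{[1]}}=h-1+\tfrac{r+m}{3}=d_2$ and $m_{\chi_{[2]}}=h-1+\tfrac{2r-m}{3}=d_1$, so that $\Mon^0=\Hdg_\R=\Sp_{2h}\times\SU(d_1,d_2)$ and $S(G)\cong \A_h\times\A(d_1,d_2)$. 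Hence $\dim S(G)=\tfrac{h(h+1)}{2}+d_1d_2$, while $\dim \mathcal{T}_{g,h}(r,m)=3h-3+r$ by the same Hurwitz-space count that gave $\dim\mathcal{B}_{g,h}=3h-3+r$.

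The heart of the proof is then the Diophantine analysis of $3h-3+r=\tfrac{h(h+1)}{2}+d_1d_2$. Here it is convenient to set $k:=n/3$, which is an integer since $\sum_i y_i\equiv 0 \bmod 3$, together with $w:=h-2+m$, so that $d_1=w+1+2k$ and $d_2=w+1+k$. Substituting $r=2m+3k$ and rearranging, the equation collapses into the clean form
\begin{equation*}
\frac{h(h-1)}{2}+(w+k)(w+2k)=0.
\end{equation*}
For $h\geq 2$ one has $w=h-2+m\geq 0$, so both factors are non-negative and the whole left-hand side is strictly positive; thus no such covering is totally geodesic, which already yields the $g\geq 8$ statement. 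For $h\in\{0,1\}$ the term $\tfrac{h(h-1)}{2}$ vanishes and the equation factors as $(w+k)(w+2k)=0$, forcing $w=-k$ or $w=-2k$.

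Finally I would enumerate the finitely many solutions with $m,k\geq 0$ and translate back via $r=2m+3k$, $g=r+3h-2$. For $h=0$ the two branches give $(g,h,r,m)=(2,0,4,2),(3,0,5,1),(4,0,6,0)$, plus the spurious $(1,0,3,0)$, which is discarded because $\dim\mathcal{T}=r-3=0$ is not positive; for $h=1$ they give $(3,1,2,1)$ and $(4,1,3,0)$. This recovers exactly the five tuples, and evaluating $d_1,d_2$ on each confirms the description $S(\mathcal{T}_{g,h}(r,m))\cong \A_h\times\A(d_1,d_2)$. The main obstacle is essentially bookkeeping: one must keep the Chevalley--Weil multiplicities and the monodromy constraints $(n\equiv 0\bmod 3,\ r=2m+n,\ g=r+3h-2)$ mutually consistent so that the factorization above is exact; once that substitution is found, the sign argument for $h\geq 2$ and the elementary factoring for $h\in\{0,1\}$ do all the work.
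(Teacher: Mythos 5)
Your proposal is correct, and its first half (no repeating factors for $G=\Zeta/3\Zeta$, hence Proposition \ref{necessità} applies; Chevalley--Weil gives $m_{\chi_{[1]}}=d_2$, $m_{\chi_{[2]}}=d_1$ and $S(G)\cong\A_h\times\A(d_1,d_2)$) is exactly what the paper does. Where you diverge is in resolving the dimension equality $\dim\mathcal{T}_{g,h}(r,m)=\dim S(G)$: the paper disposes of $h\geq 2$ by citing \cite{fgs} and of $h=0,1$ by citing the classifications in \cite{fgp,fp}, whereas you solve the equation directly. I checked your reduction: with $k=n/3$, $w=h-2+m$, $r=2m+3k$, one indeed has $d_1=w+1+2k$, $d_2=w+1+k$, and $3h-3+r-\tfrac{h(h+1)}{2}-d_1d_2=-\tfrac{h(h-1)}{2}-(w+k)(w+2k)$, so the equality is equivalent to $\tfrac{h(h-1)}{2}+(w+k)(w+2k)=0$; positivity for $h\geq 2$ and the enumeration of the branches $w+k=0$, $w+2k=0$ for $h\in\{0,1\}$ (discarding the zero-dimensional case $(1,0,3,0)$) recover precisely the five tuples, and the values of $(d_1,d_2)$ check out on each. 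Your route buys a self-contained, verifiable proof of the enumeration that does not lean on the external classification results, at the cost of some bookkeeping; the paper's route is shorter but only as strong as the cited classifications. The one point to be slightly careful about is the uniform use of $\dim\mathcal{T}_{g,h}(r,m)=3h-3+r$, which is fine for all cases that actually occur here but silently excludes the degenerate configurations ($h=1$, $r=0$ and $h=0$, $r\leq 3$), exactly as the paper does.
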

\begin{proof}
  As before, we use Proposition \ref{necessità} and the Chevalley-Weil formula and obtain that $$\Mon^0(\mathcal{T}_{g,h}(r,m))=\Hdg_\R(\mathcal{T}_{g,h}(r,m)) = \operatorname{Sp}_{2h}\times SU(d_1,d_2),$$ with $d_1$ and $d_2$ as in the statement, which shows that $S(\mathcal{T}_{g,h})$ is as claimed. Since the smallest totally geodesic subvariety containing $\mathcal{T}_{g,h}(r,m)$ is the PEL, we have that the family is totally geodesic if and only if $\dim\mathcal{T}_{g,h}=\dim S(\mathcal{T}_{g,h}) $. When $h\geq 2$  this never happens by \cite{fgs}, while for $h=0,1$, this is satisfied if and only if it is one of the families in \cite{fgp,fp}, namely it is one of the family listed in the statement. 
\end{proof}

\end{document}